\let\oldtocsection=\tocsection
\let\oldtocsubsection=\tocsubsection
\let\oldtocsubsubsection=\tocsubsubsection
\renewcommand{\tocsection}[2]{\hspace{0em}\oldtocsection{#1}{#2}}
\renewcommand{\tocsubsection}[2]{\hspace{1em}\oldtocsubsection{#1}{#2}}
\renewcommand{\tocsubsubsection}[2]{\hspace{2em}\oldtocsubsubsection{#1}{#2}}
\newcommand{\forget}[1]{}
\newtheorem{lemma}{Lemma}[section]
\newtheorem*{localtwisttheorem}{Local Twisting Theorem}
\newtheorem*{paratwisttheorem}{Paramodular Twisting Theorem}
\newtheorem*{hecketheorem}{$L$-function Theorem}
\newcommand{\A}{{\mathbb A}}
\newcommand{\Q}{{\mathbb Q}}
\newcommand{\Z}{{\mathbb Z}}
\newcommand{\R}{{\mathbb R}}
\newcommand{\C}{{\mathbb C}}
\newcommand{\p}{\mathfrak p}
\newcommand{\OF}{{\mathfrak o}}
\newcommand{\GL}{{\rm GL}}
\newcommand{\SL}{{\rm SL}}
\newcommand{\GSp}{{\rm GSp}}
\newcommand{\val}{{\rm val}}
\newcommand{\SSp}{{\rm Sp}}
\begin{document}
 
\title{Twisting of Siegel paramodular forms}
\author[Johnson-Leung and Roberts]{Jennifer Johnson-Leung\\
Brooks Roberts}
\begin{abstract}
Let $S_k(\Gamma^{\mathrm{para}}(N))$ be the space of Siegel paramodular forms of level $N$ and weight $k$.  Fix an odd prime $p\nmid N$ and let $\chi$ be a nontrivial quadratic Dirichlet character mod $p$.  Based on \cite{JR1}, we define a linear twisting map $\mathcal{T}_\chi:S_k(\Gamma^{\mathrm{para}}(N))\rightarrow S_k(\Gamma^{\mathrm{para}}(Np^4))$.  We calculate an explicit expression for this twist, give the commutation relations of this map with the Hecke operators and Atkin-Lehner involution for primes $\ell\neq p$, and prove that the $L$-function of the twist has the expected form.
\end{abstract}
\maketitle

\section{Introduction}
Let $k$ and $N$ be positive integers and let $p$ be an odd prime with $p\nmid N$.  Let $S_k(\Gamma_0(N))$ denote the space of elliptic modular cusp forms of weight $k$ with respect to $\Gamma_0(N)\subset \SL(2,\Z)$ and let $\chi$ be a nontrivial quadratic Dirichlet character mod $p$.  There is a natural twisting map $\tau_\chi:S_k(\Gamma_0(N))\rightarrow S_k(\Gamma_0(Np^2))$  such that if $f\in S_k(\Gamma_0(N))$ then 
\begin{equation}\label{gl2twisteq}
\tau_\chi(f)=\sum_{u\in(\Z/p\Z)^\times}\chi(u)f|_k\begin{bmatrix}1&u/p\\&1\end{bmatrix}.
\end{equation}

See, for example, \cite{Sh} Proposition 3.64.  A calculation verifies that for a prime $\ell\neq p$, 
\begin{equation}\label{gl2heckeeq}
T(\ell)\tau_\chi=\chi(\ell)\tau_\chi T(\ell)\qquad\text{and}\qquad (\tau_\chi f)|_kW_\ell=\chi(\ell)^{\val_\ell(N)}\tau_\chi(f|_k W_\ell),
\end{equation}
where $f\in S_k(\Gamma_0(N))$, $T(\ell)$ is the Hecke operator, and $W_\ell$ is the Atkin-Lehner involution at $\ell$ as defined in Section 2.4 of \cite{Cr}, for example.  Moreover, if $f\in S^{\mathrm{new}}_k(\Gamma_0(N))$ is a Hecke eigenform for all primes, then $\tau_\chi(f) \in S^{\mathrm{new}}_k(\Gamma_0(Np^2))$ is also a Hecke eigenform for all primes and
\begin{equation}\label{gl2lfunction}
R(s,\tau_\chi(f))=W(\chi)R(s,f,\chi),
\end{equation}
where  $W(\chi)=\sum_{u\in(\Z/p\Z)^\times}\chi(u)e^{2\pi i u/p}$ is the Gauss sum of $\chi$, $R(s,\tau_\chi(f))=R(s, \tau_\chi(f),1)$ and $R(s,f,\chi)$ is the completed $L$-function as defined in \cite{Sh}.  By identifying $S_k(\Gamma_0(N))$ with automorphic forms on the ad\`eles of $\GL(2)$ over $\Q$, it is evident that this twisting only acts on the automorphic form at the prime $p$.  Hence the global twist is induced by a local twisting map for representations of $\GL(2,\Q_p)$.  

In this paper, we define and investigate the twisting map $\mathcal{T}_\chi$ on the space of Siegel paramodular forms induced by the local twisting map for paramodular representations of $\GSp(4,\Q_p)$ with trivial central character defined in \cite{JR1}.  The main results of this work are given in Section \ref{maintheoremsec}. Our \hyperlink{mainsiegeltheorem}{Paramodular Twisting Theorem} proves a formula for $\mathcal{T}_\chi$ similar to, but more involved than, the formula \eqref{gl2twisteq}.  Our \hyperlink{heckeoperatortheorem}{$L$-function Theorem} proves commutation relations for the paramodular Hecke operators and Atkin-Lehner involution analogous to \eqref{gl2heckeeq} and an identity of completed $L$-functions analogous to \eqref{gl2lfunction}.  In our view, this identity demonstrates that $\mathcal{T}_\chi$ is in fact canonical. Moreover, it follows from Theorem 1.2 of \cite {JR1} that the map $\mathcal{T}_\chi$ is not zero in general.  Hence, our theorem may provide a source of examples to study conjectures such as the paramodular conjecture \cite{BK} and the paramodular B\"ocherer's conjecture \cite{RT}.  

In another paper \cite{JR2}, we apply the formula \eqref{twistslasheq} in the  \hyperlink{mainsiegeltheorem}{Paramodular Twisting Theorem} to compute Fourier coefficients of the twisted paramodular form $\mathcal{T}_\chi(F)$, resulting in a formula analogous to but more complicated than the formula for an elliptic modular form:
$$
\tau_\chi(f)(z)=\sum_{n=1}^\infty W(\chi)\chi(n)a_ne^{2\pi i n z},\quad \text{where}\quad f(z)=\sum_{n=1}^\infty a_ne^{2\pi i n z}.
$$   
In \cite{JR2} we also explicitly show that the Fourier coefficients of the twist of a Maass form are identically zero, as expected from local considerations.  This vanishing is a result of nontrivial cancellations that vary depending on the index of the Fourier coefficient.  In our view, this provides strong evidence that the formulas of \eqref{twistslasheq} are correct and do not admit further significant reductions.

Other types of twists by Dirichlet characters of Siegel modular forms and their $L$-functions have been studied.  See for example, \cite{KR} and the references cited therein.  However, the goal of the current paper as explained above appears to be different from previous work.  Again, our goal is to calculate maps on Siegel paramodular forms corresponding to the canonical twists of automorphic representations by quadratic Dirichlet characters.  

\section{Notation}\label{notationsec}
Let $M$ be a positive integer and let $\chi:(\mathbb Z/ M \mathbb Z)^\times \to \mathbb C^\times$
be a Dirichlet character. We let $\A$ denote the adeles of $\Q$ and define an associated Hecke character $\mathbb Q^\times \backslash \A^\times \to \mathbb C^\times$, 
denoted by $\bm \chi$, as follows. Recall that 
$
\A^\times= \mathbb Q^\times \mathbb R_{>0}^\times \prod_{\ell < \infty} \mathbb Z_\ell^\times,
$
with the groups embedded in $\A^\times$ in the usual ways. In fact, the map defined  by $(q,r,n)\mapsto qrn$
defines an isomorphism of topological groups
$$
\mathbb Q^\times \times \mathbb R_{>0}^\times \times \prod_{\ell < \infty} \mathbb Z_\ell^{\times} \stackrel{\sim}{\longrightarrow} \A^\times.
$$
Let $M=\ell_1^{\val_{\ell_1}(M)} \dots \ell_t^{\val_{\ell_t}(M)}$ be the prime factorization of $M$. We consider the composition
\begin{multline*}
\Z_{\ell_1}^\times\  \times \dots \times \  \Z_{\ell_t}^\times 
\longrightarrow
\Z_{\ell_1}^\times/(1+\ell_1^{\val_{\ell_1}(M)}\Z_{\ell_1}) \  \times \cdots \times \ \Z_{\ell_t}^\times/(1+\ell_t^{\val_{\ell_t}(M)}\Z_{\ell_t})\\
\stackrel{\sim}{\longrightarrow}
(\Z/ \ell_1^{\val_{\ell_1}(M)} \Z)^\times\  \times \dots \times\  (\Z/ \ell_t^{\val_{\ell_t}(M)} \Z)^\times
\stackrel{\sim}{\longrightarrow}
(\Z/M\Z)^\times
\stackrel{\chi}{\longrightarrow} \C^\times. 
\end{multline*}
We denote the restriction of this composition to $\Z_{\ell_i}^\times$ by $\bm \chi_{\ell_i}$. If $\ell \nmid M$, then we define $\bm \chi_\ell:\Z_\ell^\times \to \C^\times$ to be the trivial character. 
For each finite prime $\ell$, $\bm \chi_\ell$ is a continuous character of $\Z_\ell^\times$, and $\bm \chi_\ell(1+\ell^{\val_\ell(M)}\Z_\ell)=1$. 
We define the corresponding Hecke character $\bm \chi: \mathbb Q^\times \backslash \A^\times \to \mathbb C^\times$ as the composition
\begin{equation}
\label{chitochieq}
\mathbb A^\times \stackrel{\sim}{\longrightarrow} \mathbb Q^\times \times \mathbb R_{>0}^\times \times \prod_{\ell < \infty} \mathbb Z_\ell^\times
\stackrel{\mathrm{proj}}{\longrightarrow} \prod_{\ell < \infty} \mathbb Z_\ell^\times \stackrel{\prod \bm \chi_\ell}{\longrightarrow}\mathbb C^\times.
\end{equation}
We see that if $a \in \Z$ with $(a,M)=1$, then 
\begin{equation*}\label{chichieq}
 \chi(a)=\bm \chi_{\ell_1}(a) \cdots \bm \chi_{\ell_t}(a).
\end{equation*}
Let 
$$
J=\begin{bmatrix}&\bf{1}_2\\-\bf{1}_2&\end{bmatrix}.
$$
We define the algebraic $\mathbb{Q}$-group $\GSp(4)$ as the set of all $g\in\GL(4)$ such that ${}^tgJg=\lambda(g)J$ for some $\lambda(g)\in\GL(1)$ called the multiplier of $g$. Let $\GSp(4,\R)^+$ be the subgroup of $g\in\GSp(4,\R)$ such that $\lambda(g)>0$. The kernel of $\lambda:\GSp(4)\rightarrow\GL(1)$ is the symplectic group $\SSp(4)$.
Let $N$ and
$k$ be  positive integers. We define the paramodular group of level $N$ to be 
$$
\Gamma^{\mathrm{para}}(N)=\SSp(4,\Q)\cap\begin{bmatrix}\Z&\Z&N^{-1}\Z&\Z\\N\Z&\Z&\Z&\Z\\N\Z&N\Z&\Z&N\Z\\N\Z&\Z&\Z&\Z\end{bmatrix}.
$$
We also define local paramodular groups.  For $\ell$ a prime of $\Q$ and $r$ an non-negative integer, let $K^{\mathrm{para}}(\ell^r)$ be the paramodular subgroup of $\GSp(4,\Q_\ell)$ of level $\ell^r$, i.e., the subgroup of elements $g \in \GSp(4,\Q_\ell)$ such that $\lambda (g) \in \Z_\ell^\times$ and 
$$
g \in \begin{bmatrix}
       \Z_\ell&\Z_\ell&\ell^{-r}\Z_\ell&\Z_\ell\\
\ell^r\Z_\ell&\Z_\ell&\Z_\ell&\Z_\ell\\
\ell^r\Z_\ell&\ell^r\Z_\ell&\Z_\ell&\ell^r\Z_\ell\\
\ell^r\Z_\ell&\Z_\ell&\Z_\ell&\Z_\ell
      \end{bmatrix}.
$$Note that
$$
\Gamma^{\mathrm{para}}(N) = \GSp(4,\Q) \cap \GSp(4,\R)^+ \prod_{\ell<\infty} K^{\mathrm{para}}(\ell^{\val_\ell(N)}),
$$
with intersection in $\GSp(4,\A)$.  

For $n$ a positive integer, let $\mathfrak{H}_n$ denote the Siegel upper half plane of degree $n$ with $I=i{\bf1}_{2n}$.  The group $\GSp(4,\R)^+$ acts on $\mathfrak{H}_2$ via 
$$
h\langle Z\rangle=(AZ+B)(CZ+D)^{-1}, \qquad h=\begin{bmatrix}A&B\\C&D\end{bmatrix} ,\qquad Z\in\mathfrak{H}_2.
$$
Denote the factor of automorphy by $j(h,Z)=\det(CZ+D)$. If $F : \mathfrak{H}_2 \to \C$ is a function and $h\in \GSp(4,\R)^+$, then we define $F|_k h: \mathfrak{H}_2 \to \C$ by
$$
(F|_k h)(Z) = \lambda(h)^k j(h,Z)^{-k} F(h\langle Z\rangle ),\qquad Z \in \mathfrak{H}_2.
$$
Let $\Gamma\subset\GSp(4,\Q)$ be a group commensurable with $\SSp(4,\Z)$.  We define $S_k(\Gamma)$ to be
the complex vector space of  functions $F: \mathfrak{H}_2 \to \C$
such that 
\begin{enumerate}
\item $F$ is holomorphic;
\item $F|_k \gamma = F$ for all $\gamma \in \Gamma$; 
\item $\lim_{t \to \infty} (F|_k \gamma )( \begin{bmatrix} it & \\ & z \end{bmatrix} ) =0$
for all $\gamma \in \SSp(4,\Z)$ and $z \in \mathfrak{H}_1$.
\end{enumerate}
For further background see, for example, \cite{PY}.

 Let $p$ be an odd prime with $p\nmid N$ and $\chi$ the non-trivial quadratic Dirichlet character modulo $p$.  Let $F\in S_k^{\mathrm{new}}(\Gamma^{\mathrm{para}}(N))$, and assume that for a prime number $\ell$,
$$
T(1,1,\ell,\ell)F=\lambda_{F,\ell}F,\qquad T(1,\ell,\ell,\ell^2)F=\mu_{F,\ell}F,\qquad F|_kU_\ell=\varepsilon_{F,\ell}F,
$$  
 where the paramodular Hecke operators and Atkin-Lehner involution are defined below in Section \ref{heckeoperators}.  Motivated by the results in \cite{RS}, we define $L_\ell(s,F,\chi)$ as follows:\\

\noindent i) If $\val_\ell(Np)=0$, then
$$
L_\ell(s,F,\chi)^{-1}=1-\chi(\ell)\lambda_{F,\ell}\ell^{-s}+(\ell\mu_{F,\ell}+\ell^{2k-3}+\ell^{2k-5})\ell^{-2s}-\ell^{2k-3}\chi(\ell)\lambda_{F,\ell}\ell^{-3s}+\ell^{4k-6}\ell^{-4s}.
$$
\noindent ii) If $\val_\ell(N)=1$, then
$$
L_\ell(s,F,\chi)^{-1}=1-\chi(\ell)(\lambda_{F,\ell}+\ell^{k-3}\varepsilon_{F,\ell})\ell^{-s}+(\ell\mu_{F,\ell}+\ell^{2k-3})\ell^{-2s}+\chi(\ell)\varepsilon_{F,\ell}\ell^{3k-5}\ell^{-3s}.
$$
\noindent iii) If $\val_\ell(N)\geq2$, then
$$
L_\ell(s,F,\chi)^{-1}=1-\chi(\ell)\lambda_{F,\ell}\ell^{-s}+(\ell\mu_{F,\ell}+\ell^{2k-3})\ell^{-2s}.
$$
\noindent iv) If $\ell=p$, then $L_\ell(s,F,\chi)=1$.\\

\noindent If $F$ is an eigenform for every prime number $\ell$, we define the completed $\chi$-twisted $L$-function of $F$ to be
$$
\Lambda(s,F,\chi)=2\pi^{-2s}\Gamma(s)\Gamma(s-k+2)\prod_{\ell<\infty}L_\ell(s,F,\chi).
$$

Let $Q$ be a $2\times 2$ symmetric matrix, and let $P$ be an invertible $2\times 2$ matrix.  Then the matrices
$$U(Q)=\begin{bmatrix}1&Q\\&1\end{bmatrix}\qquad\text{and}\qquad A(P)=\begin{bmatrix}P&\\&{}^tP^{-1}\end{bmatrix}.$$  are in $\GSp(4)$.

\section{Main results}\label{maintheoremsec}
In this section, we present the main results of this paper.  The proofs of these theorems are presented in subsequent sections. The local twisting map from \cite{JR1} induces a twisting map for Siegel paramodular forms, as explained in Section \ref{automorphicformssection}. This theorem expresses the twisting map as a slash operator in a fashion analogous to the formula \eqref{gl2twisteq} for elliptic modular cusp forms.  Significantly, the matrices in the formula lie in the Borel subgroup of $\GSp(4)$, allowing, for example, the computation of Fourier coefficients of the twist, as in \cite{JR2}.
\begin{paratwisttheorem}
\hypertarget{mainsiegeltheorem}{Let} $N$ and $k$ be positive integers, $p$ an odd prime with $p\nmid N$, and $\chi$ the nontrivial quadratic Dirichlet character mod $p$.  Then, the local twisting map from Theorem 1.2 of \cite{JR1} induces a linear map
$$
\mathcal{T}_\chi:S_k(\Gamma^{\mathrm{para}}(N))\rightarrow S_k(\Gamma^{\mathrm{para}}(Np^4)).
$$
If $F\in S_k(\Gamma^{\mathrm{para}}(N))$, then this map is given by the formula
\begin{equation}\label{twistslasheq}
\mathcal{T}_\chi(F)=\sum_{i=1}^{14}F|_k\mathcal{T}^i_\chi,
\end{equation}
where the $\mathcal{T}^i_\chi$ are defined from the $T^i_\chi$ in the \hyperlink{localtwisttheorem}{Local Twisting Theorem} below by replacing $\OF$ by $\Z$, $\p^k$ by $p^k\Z$, $\varpi$ and $q$ by $p$, and finally $A(P)U(Q)$ by $U(-Q)A(P^{-1})$ for $2\times2$ matrices $P$ and $Q$.  We also extend the slash operator $|_k$  to formal $\C$-linear combinations of elements of $\GSp(4,\R)^+$.
\end{paratwisttheorem}

\noindent The proof of the \hyperlink{paratwisttheorem}{Paramodular Twisting Theorem} is given in Section \ref{maintheoremproofsubsec}.

The following theorem shows that the twisting map from the previous theorem is canonical.  Indeed, we prove that if $F\in S_k^\mathrm{new}(\Gamma^{\mathrm{para}}(N))$ is a Hecke eigenform and $\mathcal{T}_\chi(F)\neq0$, then $\mathcal{T}_\chi(F)$ is a newform and a Hecke eigenform with the expected $L$-function.  
\begin{hecketheorem}
\hypertarget{heckeoperatortheorem}{Let} $N$ and $k$ be positive integers, $p$ an odd prime with $p\nmid N$, and $\chi$ the nontrivial quadratic Dirichlet character mod $p$.  
\begin{enumerate}
\item For every prime $\ell\neq p$ and for $F\in S_k(\Gamma^{\mathrm{para}}(N))$  we have the following commutation relations for the Hecke operators \eqref{heckeoperatordef} and Atkin-Lehner operator \eqref{atkinlehnerdef}:
\begin{align*}
T(1,1,\ell,\ell)\mathcal{T}_\chi=&\chi(\ell)\mathcal{T}_\chi T(1,1,\ell,\ell),\\
T(1,\ell,\ell,\ell^2)\mathcal{T}_\chi=&\mathcal{T}_\chi T(1,\ell,\ell,\ell^2),\\
\mathcal{T}_\chi (F)|_kU_\ell=&\chi(\ell)^{\val_\ell(N)}\mathcal{T}_\chi(F|_kU_\ell),
\end{align*}
\item Let $F\in S^{\mathrm{new}}_k(\Gamma^{\mathrm{para}}(N))$ be an eigenform for $T(1,1,\ell,\ell)$, $T(1,\ell,\ell,\ell^2)$, and $U_\ell$ for every prime number $\ell$, and assume that $\mathcal{T}_\chi(F)\neq0$.  Then $\mathcal{T}_\chi(F)$ is in $S_k^\mathrm{new}(\Gamma^\mathrm{para}(Np^4))$ and is an eigenform for $T(1,1,\ell,\ell)$, $T(1,\ell,\ell,\ell^2)$, and $U_\ell$ for every prime number $\ell$, with Hecke eigenvalues
$$
\lambda_{\mathcal{T}_\chi(F),\ell}=\begin{cases}\chi(\ell)\lambda_{F,\ell}& \text{if }\ell\neq p\\
									0&\text{if }\ell=p,\end{cases}
\qquad
\mu_{\mathcal{T}_\chi(F),\ell}=\begin{cases}\mu_{F,\ell}&\text{if }\ell\neq p\\
-p^{2k-4}&\text{if }\ell=p,\end{cases}
$$ 
and
$$
\varepsilon_{\mathcal{T}_\chi(F),\ell}=\begin{cases}\chi(\ell)^{\val_\ell(N)}\varepsilon_{F,\ell}&\text{if }\ell\neq p\\1&\text{if }\ell=p.\end{cases}
$$
Moreover,
$$
\Lambda(s,\mathcal{T}_\chi(F))=\Lambda(s,F,\chi),
$$
where $\Lambda(s, \mathcal{T}_\chi(F))$ and $\Lambda(s,F,\chi)$ are the completed $L$-functions as defined in \cite{JR} and Section \ref{notationsec}, respectively.
\end{enumerate}
\end{hecketheorem}
\noindent The proof of the \hyperlink{hecketheorem}{$L$-function Theorem} is given in Section \ref{heckeoperators}.  Conjecturally, this completed $L$-function satisfies the functional equation
$$
\Lambda(2k-2-s,F,\chi)=(-1)^k(Np^4)^{s-k+1}\chi(N)\left(\prod_{\ell\mid N}\varepsilon_{F,\ell}\right)\Lambda(s,F,\chi).
$$
We note that this agrees with the equation in the main theorem of \cite {KR} in the case that the assumptions of that work and this work are both satisfied.

The following technical theorem expresses the local twisting map from \cite{JR1} in terms of matrices from the Borel subgroup. In the theorem, we let $F$ be a nonarchimedean local field of characteristic zero and odd residual characteristic with ring of integers $\OF$ and generator $\varpi$ of the maximal ideal $\p$ of $\OF$, and let $q$ be the number of elements in $\OF/\p$.  The existence of an expression involving only upper triangular matrices follows from the Iwasawa decomposition for $\GSp(4,F)$. However, as is evident from the proof, finding such an explicit expression requires some intricate calculations.
\begin{localtwisttheorem}
\hypertarget{localtwisttheorem}{Let} $(\pi,V)$ be a smooth representation of $\GSp(4,F)$ for which the center acts trivially, and let $\chi$ be a quadratic character of $F^\times$ of conductor $\p$.  Let $v\in V$ be such that $\pi(k)v=v$ for $k\in\GSp(4,\OF)$.  Then, the twisting map  $T_\chi:V(0) \to V(4,\chi)$ defined in Theorem 1.2 of \cite{JR1} is given by the formula
$$
T_\chi(v)=\sum_{i=1}^{14}\pi(T^i_\chi)v
$$
for $v \in V(0)$, where
\begin{align*}
T_\chi^1&=q^{-11}\!\!\!\!\!\!
\sum_{\substack{a,b,x\in(\OF/\p^3)^\times\\ z\in\OF/\p^4}}\chi(ab)   A(
\begin{bmatrix} 1 & -(a+xb)\varpi^{-1} \\&1 \end{bmatrix})U(\begin{bmatrix}  z \varpi^{-4} & b\varpi^{-2} \\b\varpi^{-2}&x^{-1}\varpi^{-1} \end{bmatrix}),\\
T_\chi^2&=q^{-11}\!\!\!\sum_{\substack{a,b\in(\OF/\p^3)^\times\\x,y\in(\OF/\p^3)^\times\\x,y\not\equiv1(\p)}}\!\!\!\chi(abxy)
A(\begin{bmatrix}
\varpi^{-1}&b \varpi^{-2} \\
&1
\end{bmatrix})U(\begin{bmatrix} ab(1-(1-y)^{-1}x)\varpi^{-3}&-a\varpi^{-2}  \\ -a\varpi^{-2}& ab^{-1}(1-x)^{-1}\varpi^{-1}\end{bmatrix}),\\
T_\chi^3&=q^{-6}\sum_{\substack{a\in(\OF/\p^2)^\times\\ b\in(\OF/\p^3)^\times\\ z\in(\OF/\p)^\times\\z\not\equiv1(\p)}}\chi(b(1-z))
A(\begin{bmatrix}\varpi^{-1}&\\&1\end{bmatrix})U(\begin{bmatrix}
b\varpi^{-3}&a\varpi^{-2}\\
a\varpi^{-2}&a^2b^{-1}z\varpi^{-1}
\end{bmatrix})v,\\
T_\chi^4&=q^{-10}\sum_{\substack{a\in\OF/\p^4\\ b\in(\OF/\p^3)^\times\\x\in(\OF/\p^4)^\times}} \chi(b)
A(\begin{bmatrix} \varpi^{-1}& x\varpi^{-3} \\ &1 \end{bmatrix})U(\begin{bmatrix} (b\varpi-ax)\varpi^{-4}& a\varpi^{-2} \\ a\varpi^{-2} &\end{bmatrix} )
,\\
T_\chi^5&=q^{-9}\sum_{\substack{a,b\in(\OF/\p^3)^\times\\x\in\OF/\p^3}}\chi(b)
A(\begin{bmatrix} \varpi^{-1}& x\varpi^{-2} \\ &1\end{bmatrix})U(\begin{bmatrix} (b-ax)\varpi^{-3}& a\varpi^{-2} \\ a\varpi^{-2} & \end{bmatrix}) 
,\\
T_\chi^6&=q^{-6}\sum_{\substack{a,b\in(\OF/\p^2)^\times\\ x\in(\OF/\p)^\times}} \chi(bx)
A(\begin{bmatrix}\varpi^{-2}&\\&1\end{bmatrix})U(
\begin{bmatrix}
b(1-x\varpi)\varpi^{-2}&a\varpi^{-2}\\
a\varpi^{-2}&a^2b^{-1}\varpi^{-2}
\end{bmatrix} )
,\\
T_\chi^7&=q^{-7}\sum_{\substack{a\in(\OF/\p^3)^\times\\ b\in(\OF/\p)^\times\\z\in\OF/\p^4}}\chi(ab)  
 A(\begin{bmatrix} 1 & -a\varpi^{-2} \\ &\varpi^{-1} \end{bmatrix})U( \begin{bmatrix} z\varpi^{-4} & b \varpi^{-1}\\ b\varpi^{-1} & \end{bmatrix}  ),\\
T_\chi^8&=q^{-9}\sum_{\substack{a,b,z\in(\OF/\p^3)^\times\\z\not\equiv1(\p)}}\chi(abz(1-z))
A(\begin{bmatrix}
\varpi^{-1}&b\varpi^{-3}\\
&\varpi^{-1}
\end{bmatrix})U(
\begin{bmatrix}
-ab(1-z)\varpi^{-3}  & a\varpi^{-1}\\
a\varpi^{-1}&
\end{bmatrix}
,\\
T_\chi^9&=q^{-6}\sum_{\substack{a\in(\OF/\p^2)^\times\\b,x\in(\OF/\p)^\times}} \chi(b)
A(
\begin{bmatrix}
\varpi^{-2}&a\varpi^{-3}\\
&\varpi^{-1}
\end{bmatrix})
U(\begin{bmatrix}
-b\varpi^{-1}& \\
 &x\varpi^{-1} 
\end{bmatrix},\\
T_\chi^{10}&=q^{-6}\sum_{\substack{a\in(\OF/\p^2)^\times\\ b\in(\OF/\p)^\times}} \chi(b)
A(\begin{bmatrix}
\varpi^{-2}&a\varpi^{-4}\\
&\varpi^{-2}
\end{bmatrix})U(\begin{bmatrix}b\varpi^{-1}&\\&\end{bmatrix}) ,\\
T_\chi^{11}&=q^{-10}
\sum_{\substack{a\in(\OF/\p^2)^\times\\b\in(\OF/\p^4)^\times\\x\in\OF/\p^3\\z\in\OF/\p^4}}\chi(ab)  A(\begin{bmatrix} 1 & b\varpi^{-1} \\ &\varpi\end{bmatrix})U(
\begin{bmatrix} z\varpi^{-4} & (xb+a\varpi)\varpi^{-3} \\(xb+a\varpi)\varpi^{-3}&-x\varpi^{-2} \end{bmatrix}  ,\\
T_\chi^{12}&=q^{-12}\!\!\!\!\!\!\!\!\!\sum_{\substack{y\in\OF/\p^4\\a\in(\OF/\p^4)^\times\\b,z\in(\OF/\p^3)^\times\\z\not\equiv1(\p)}} \!\!\!\!\!\!\!\chi(abz(1-z))
A(\begin{bmatrix}
\varpi^{-1}&a\varpi^{-2}\\
&\varpi\end{bmatrix})U(
\begin{bmatrix}
a(b(1-z)\varpi-y)\varpi^{-4}\!\!&y\varpi^{-3}\\
y\varpi^{-3}\!\! &-a^{-1}(y+b\varpi)\varpi^{-2}
\end{bmatrix}),\\
T_\chi^{13}&=q^{-6}\sum_{\substack{a\in(\OF/\p^2)^\times\\b\in(\OF/\p^3)^\times\\x\in(\OF/\p)^\times}} \chi(bx)A(\begin{bmatrix}\varpi^{-2}&\\&\varpi\end{bmatrix})U(\begin{bmatrix}b(1-x)\varpi^{-1}&a\varpi^{-2}\\
a\varpi^{-2}&a^2b^{-1}\varpi^{-3}
\end{bmatrix} )
,\\
T_\chi^{14}&=q^{-6}\sum_{\substack{a\in(\OF/\p^2)^\times\\b\in(\OF/\p)^\times\\x\in\OF/\p^4}} \chi(b)
A(\begin{bmatrix}\varpi^{-2}&\\&\varpi^2\end{bmatrix})U(
\begin{bmatrix}
-b\varpi^{-1}&a\varpi^{-2}\\
a\varpi^{-2}&x \varpi^{-4}\end{bmatrix} ).
\end{align*}
Here, $A(P)$ and $U(Q)$ are defined as in Section \ref{notationsec}.
\end{localtwisttheorem}
\noindent The proof of the \hyperlink{localtwisttheorem}{Local Twisting Theorem} is given in Section \ref{localsec}.
\section{Automorphic forms}\label{automorphicformssection}
In order to present the proof of the Paramodular Twisting Theorem we must explain the connection between Siegel modular forms and automorphic forms on $\GSp(4,\A)$.   Let $k$ be a positive integer, and let $\mathcal{F} = \{ K_\ell \}_\ell$, where $\ell$ runs over the finite primes of $\Q$, be a family of compact, open subgroups of $\GSp(4,\Q_\ell)$ such that $K_\ell=\GSp(4,\Z_\ell)$
for almost all $\ell$ and $\lambda (K_\ell) = \Z_\ell^\times$ for all $\ell$.  To $\mathcal{F}$ and $k$ we will associate a space of automorphic forms on $\GSp(4,\A)$ and a space of Siegel modular forms of degree two.  Set 
$$
K_{\mathcal{F}} = \prod_{\ell<\infty} K_\ell \subset \GSp(4,\A_f).
$$
Since $\lambda (K_\ell) = \Z_\ell^\times$ for all finite $\ell$, 
strong approximation for $\SSp(4)$ implies that 
$$
\GSp(4,\A) = \GSp(4,\Q) \GSp(4,\R)^+ K_{\mathcal{F}}.
$$
 Let $\bm \chi: \A^\times \to \C^\times$ be a
quadratic Hecke character, and let $\bm \chi=\prod_{\ell \leq \infty} \bm\chi_\ell$ be the decomposition of $\bm \chi$ as a product of local characters.  Then $\bm \chi_\infty (\R^\times_{>0}) =1$. Also let 
 $$
 K_\infty=\left\{\begin{bmatrix}A&B\\-B&A\end{bmatrix}\in\GL(4,\R):\,{}^t\!AA+{}^t\!BB={\bf1}, \,{}^t\!AB={}^t\!BA\right\}.
 $$ 
We define $S_k(K_{\mathcal{F}},\bm\chi)$ to be the space of continuous functions $\Phi: \GSp(4,\A) \to \C$
such that
\begin{enumerate}
\item  $\Phi ( \rho g ) = \Phi (g)$ for all $\rho \in \GSp(4,\Q)$ and $g \in \GSp(4,\A)$;
\item $\Phi (g z) = \Phi (g)$ for all $z \in \A^\times$ and $g \in \GSp(4,\A)$;
\item $\Phi (g\kappa_\ell) = \bm \chi_\ell(\lambda(\kappa_\ell)) \Phi (g)$ for all $\kappa_\ell \in K_\ell$, $g \in \GSp(4,\A)$ and finite primes $\ell$ of $\Q$;
\item $\Phi (gk_\infty) = j(k_\infty,I)^{-k}\Phi  (g)$ for all $k_\infty \in K_\infty$ and $g \in \GSp(4,\A)$;
\item For any  proper parabolic subgroup $P$ of $\GSp(4)$
$$
\int\limits_{N_P(\Q)\backslash N_P(\A)} \Phi(ng)\dif n =0
$$
for all $g \in \GSp(4,\A)$; here $N_P$ is 
the unipotent radical of $P$;
\item For any $g_f \in \GSp(4,\A_f)$, the function $\GSp(4,\R)^+ \to \C$ defined by $g_\infty \mapsto \Phi(g_fg_\infty)$
is smooth and is annihilated by $\mathfrak{\p_\C^-}$, where we refer to Section 3.5 of \cite{AS} for the definition of $\mathfrak{\p_\C^-}$. 
\end{enumerate}
On the other hand, to $\mathcal{F}$ we
associate a subgroup of $\SSp(4,\Q)$ that is commensurable with
$\GSp(4,\Z)$,
$$
\Gamma_{\mathcal{F}} = \GSp(4,\Q) \cap \GSp(4,\R)^+ \prod_{\ell<\infty} K_\ell.
$$
We define $S_k(\Gamma_{\mathcal{F}})$ as in Section \ref{notationsec}.
\begin{lemma}\label{KFisolemma}
Let $\bm \chi$, $\mathcal{F}$ and $k$ be as above.  For $F\in S_k(\Gamma_\mathcal{F})$, define $\Phi_F:\GSp(4,\A)\rightarrow\C$ by
$$
\Phi_F ( \rho h \kappa) = \lambda(h)^k j(h,I)^{-k} \cdot \prod_{\ell< \infty} \bm \chi_\ell(\lambda(\kappa_\ell)) \cdot F(h\langle I \rangle )  
$$
for $\rho \in \GSp(4,\Q),  h \in \GSp(4,\R)^+, \kappa \in K_{\mathcal{F}}=\prod_{\ell<\infty} K_\ell$. 
Then $\Phi_F$ is a well-defined element of $S_k(K_{\mathcal{F}}, \bm \chi)$, so that there is a complex linear map
\begin{equation}
\label{modtoautoeq}
S_k(\Gamma_{\mathcal{F}}) \longrightarrow S_k(K_{\mathcal{F}}, \bm \chi). 
\end{equation}
Conversely, let $\Phi \in S_k(K_{\mathcal{F}}, \bm \chi)$. Define $F_\Phi: \mathfrak{H}_2 \to \C$ by
$$
F_\Phi (Z) = \lambda(h)^{-k} j(h,I)^k \Phi(h)
$$
for $Z \in \mathfrak{H}_2$ with $h \in \GSp(4,\R)^+$ with $h\langle I \rangle = Z$. Then
$F_\Phi$ is well-defined and contained in $S_k(\Gamma_{\mathcal{F}})$, so that there is complex linear map
\begin{equation}
\label{autotomodeq}
S_k(K_{\mathcal{F}}, \bm \chi)  \longrightarrow S_k(\Gamma_{\mathcal{F}}). 
\end{equation}
Moreover, the maps \eqref{modtoautoeq} and \eqref{autotomodeq} are inverses of each other. 
\end{lemma}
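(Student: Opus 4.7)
The plan is to establish Lemma \ref{KFisolemma} in four steps: well-definedness of $\Phi_F$; verification that $\Phi_F$ satisfies axioms (1)--(6); well-definedness of $F_\Phi$ and verification that $F_\Phi\in S_k(\Gamma_\mathcal{F})$; and verification that the two maps are mutually inverse.

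First, strong approximation (noted in the excerpt just before the definition of $S_k(K_\mathcal{F},\bm\chi)$) provides a decomposition $g=\rho h\kappa$ with $\rho\in\GSp(4,\Q)$, $h\in\GSp(4,\R)^+$, $\kappa\in K_\mathcal{F}$. If $\rho_1h_1\kappa_1=\rho_2h_2\kappa_2$ is a second such decomposition, set $\gamma=\rho_2^{-1}\rho_1$. Comparing components at each place shows $\gamma\in\GSp(4,\Q)\cap\GSp(4,\R)^+K_\mathcal{F}=\Gamma_\mathcal{F}$, with $h_2=\gamma h_1$ at infinity and $\kappa_{2,\ell}=\gamma\kappa_{1,\ell}$ at every finite prime. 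Since $\lambda(\gamma)\in\Q^\times_{>0}$ and $\bm\chi$ is a Hecke character with $\chi_\infty|_{\R^\times_{>0}}=1$, one has $\prod_{\ell<\infty}\chi_\ell(\lambda(\gamma))=1$. Combined with $F|_k\gamma=F$ and the cocycle relation $j(\gamma h_1,I)=j(\gamma,h_1\langle I\rangle)j(h_1,I)$, this proves the value of $\Phi_F$ is independent of the chosen decomposition.

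Second, axioms (1), (3), (4) are essentially built into the definition: a left $\GSp(4,\Q)$ factor is absorbed into $\rho$; a right $K_\ell$ factor contributes the expected $\chi_\ell$ multiplier; and for $k_\infty\in K_\infty$ the invariance of $I$ under $K_\infty$ and $\lambda(K_\infty)=1$, together with the cocycle identity for $j$, yield the claimed transformation. Axiom (2) uses the idele decomposition $\A^\times=\Q^\times\R^\times_{>0}\prod_{\ell<\infty}\Z_\ell^\times$: the $\Q^\times$ component is absorbed by $\rho$, the $\R^\times_{>0}$ factor cancels between $\lambda(h)^k$ and $j(h,I)^{-k}$ (since $r\in\R^\times_{>0}$ acts as $r\mathbf{1}_4$ with $\lambda(r\mathbf{1}_4)=r^2$ and $j(r\mathbf{1}_4,I)=r^2$), and a factor $n\in\prod\Z_\ell^\times$ contributes $\prod\chi_\ell(n_\ell^2)=1$ because $\bm\chi$ is quadratic. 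Axioms (5) and (6) are the standard adelic translations of the classical cuspidality and holomorphy conditions; I would cite \cite{AS} for both.

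Third, the stabilizer of $I\in\mathfrak{H}_2$ in $\GSp(4,\R)^+$ equals $\R^\times_{>0}K_\infty$, and invariance of the defining formula for $F_\Phi$ under this stabilizer follows from axioms (2) and (4). The $\Gamma_\mathcal{F}$-transformation of $F_\Phi$ then comes from axioms (1) and (3) together with $\chi_\infty(\lambda(\gamma))=1$ for $\gamma\in\Gamma_\mathcal{F}$, while holomorphy and cuspidality of $F_\Phi$ are the reverse translations of axioms (6) and (5). Finally, for mutual inversion: in $F\mapsto\Phi_F\mapsto F_{\Phi_F}$, taking $\rho=1=\kappa$ and any $h$ with $h\langle I\rangle=Z$ in the definition of $\Phi_F$ gives $\Phi_F(h)=\lambda(h)^k j(h,I)^{-k}F(Z)$, which the formula for $F_\Phi$ then inverts. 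In the other direction, $\Phi_{F_\Phi}(\rho h\kappa)=\Phi(\rho h\kappa)$ follows by substituting the definition of $F_\Phi$ into that of $\Phi_{F_\Phi}$ and applying axioms (1) and (3). The main obstacle will be the precise correspondence between the classical cuspidality of condition (3) on $F$ and the vanishing of integrals over the unipotent radicals of all proper parabolics in axiom (5), together with the translation between classical holomorphy on $\mathfrak{H}_2$ and annihilation by $\mathfrak{p}_\C^-$ in axiom (6). These equivalences are standard but genuinely require work, so I would invoke the $\GSp(4)$-specific treatment in \cite{AS} rather than redo them here.
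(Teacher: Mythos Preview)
Your proposal is correct and follows essentially the same route as the paper: a direct well-definedness check via $\gamma=\rho_2^{-1}\rho_1\in\Gamma_{\mathcal F}$ and the cocycle relation for $j$, straightforward verification of axioms (1)--(4), and an appeal to \cite{AS} for the cuspidality/holomorphy correspondences (axioms (5) and (6)). Your write-up is in fact somewhat more explicit than the paper's, which dispatches the inverse map and mutual inversion in a single sentence each; one minor simplification you could use is that $\lambda(\gamma)\in\Q^\times_{>0}\cap\prod_\ell\Z_\ell^\times=\{1\}$, so the character product vanishes trivially without invoking the Hecke-character property.
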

\begin{proof}
To prove that $\Phi_F$ is well defined, suppose that $\rho h \kappa=\rho'h'\kappa'$ for $\rho, \rho'\in\GSp(4,\Q)$, $h, h'\in\GSp(4,\R)^+$, and $\kappa, \kappa'\in\prod_{\ell<\infty} K_\ell$.  Comparing components, we have that $\rho h=\rho'h'\in \GSp(4, \R)^+$ and $\rho\kappa_\ell=\rho'\kappa'_\ell\in\GSp(4,\Q_\ell)$ for $\ell<\infty$.  Therefore $\rho_0=\rho'{}^{-1}\rho\in\Gamma_\mathcal{F}$.  Noting that $\lambda(\rho_0)=1$, we have that
\begin{align*}
\lambda(h')^kj(h',I)^{-k}&\prod_{\ell<\infty}\bm \chi_\ell(\lambda(\kappa'_\ell))F(h'\langle I\rangle)\\
=&\lambda(\rho_0h)^kj(\rho_0h,I)^{-k}\prod_{\ell<\infty}\bm \chi_\ell(\lambda(\rho_0\kappa_\ell))F((\rho_0h)\langle I\rangle)\\
=&\lambda(\rho_0)^k\lambda(h)^kj(\rho_0,h\langle I\rangle)^{-k}j(h,I)^{-k}\prod_{\ell<\infty}\bm \chi_\ell(\lambda(\kappa_\ell))F(\rho_0\langle h\langle I\rangle\rangle)\\
=&\lambda(h)^kj(h,I)^{-k}\prod_{\ell<\infty}\bm \chi_\ell(\lambda(\kappa_\ell))(F|_k\rho_0)( h\langle I\rangle)\\
=&\lambda(h)^kj(h,I)^{-k}\prod_{\ell<\infty}\bm \chi_\ell(\lambda(\kappa_\ell))F( h\langle I\rangle).\\
\end{align*}
This shows that $\Phi_F$ is well-defined.  Straightforward calculations show that the function also satisfies first four conditions in the definition of $S_k(K_\mathcal{F},\bm \chi)$.  The proofs of the fifth and sixth conditions are similar to the proofs of Lemma 5 and Lemma 7, respectively, in \cite{AS}.  Similar calculations show that $F_\Phi\in S_k(\Gamma_\mathcal{F})$, for $\Phi$ in $S_k(K_\mathcal{F},\bm \chi)$.  Finally, it is clear that these two maps are inverses of each other.  
\end{proof}
\begin{lemma}\label{operatorlemma}
Let $k$ be a positive integer and let $\mathcal{F}_1=\{K^1_\ell\}_\ell$ and $\mathcal{F}_2=\{K^2_\ell\}_\ell$, where $\ell$ runs over the finite primes of $\Q$, be families of compact, open subgroups of $\GSp(4,\Q_\ell)$ such that $K^1_\ell=K^2_\ell=\GSp(4,\Z_\ell)$ for almost all $\ell$ and $\lambda(K^1_\ell)=\lambda(K^2_\ell)=\Z_\ell^\times$ for all $\ell$. Let $\bm \chi_1$ and $\bm \chi_2$ be quadratic Hecke characters. Suppose that there is a linear map
$$
T:S_k(K_{\mathcal{F}_1},\bm \chi_1)\rightarrow S_k(K_{\mathcal{F}_2},\bm \chi_2)
$$
given by a right translation formula at the $p$th place,
$$
T(\Phi_1)=\sum_{i=1}^tc_iR\big(B_{i,p}\big)\Phi_1,
$$
for $\Phi_1\in S_k(K_{\mathcal{F}_1},\bm \chi_1)$. Here $c_i\in\C^\times$ and $B_i\in\SSp(4,\Q)^+$ are such that $B_i\in K^1_\ell$ for $\ell\neq p$, $i\in\{1,...,t\}$. Then the composition, $\mathcal{T}$,
\begin{equation}
\label{TmathcalTeq}
\begin{CD}
S_k(\Gamma_{\mathcal{F}_1})@>\mathcal{T}>>S_k(\Gamma_{\mathcal{F}_2})\\
@V\wr VV @AA\wr A\\
S_k(K_{\mathcal{F}_1},\bm \chi_1)@>T>>S_k(K_{\mathcal{F}_2},\bm \chi_2)
\end{CD}
\end{equation}
is given by the formula
$$
\mathcal{T}(F)= \sum_{i=1}^t c_i \cdot F|_k (B_i)^{-1}
$$
for $F \in S_k(\Gamma_{\mathcal{F}_1})$.
\end{lemma}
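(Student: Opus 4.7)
The plan is to chase an arbitrary $F \in S_k(\Gamma_{\mathcal{F}_1})$ around the commutative square \eqref{TmathcalTeq} and compute the output at a point $Z \in \mathfrak{H}_2$. Fix such a $Z$ and any $h \in \GSp(4,\R)^+$ with $h\langle I \rangle = Z$. Writing $h_\infty$ for the adelic element equal to $h$ at the archimedean place and $1$ at every finite place, the recovery formula from Lemma \ref{KFisolemma} combined with linearity of $T$ gives
$$\mathcal{T}(F)(Z) \;=\; \lambda(h)^{-k} j(h,I)^k \sum_{i=1}^t c_i\,\Phi_F(h_\infty B_{i,p}),$$
so the problem reduces to evaluating each $\Phi_F(h_\infty B_{i,p})$ via the defining formula for $\Phi_F$.

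The central step is to decompose $h_\infty B_{i,p}$ in the form $\rho h' \kappa$ required by that defining formula, with $\rho \in \GSp(4,\Q)$, $h' \in \GSp(4,\R)^+$, and $\kappa \in \prod_{\ell < \infty} K^1_\ell$. The trick is to view $B_i$ simultaneously in two ways: as the adelic element obtained from the diagonal embedding $\GSp(4,\Q) \hookrightarrow \GSp(4,\A)$, and as the local element $B_{i,p}$ supported only at the prime $p$. Multiplying $h_\infty B_{i,p}$ on the left by the diagonally embedded $B_i^{-1}$ produces the identity
$$h_\infty B_{i,p} \;=\; B_i \cdot (B_i^{-1} h)_\infty \cdot \kappa_i,$$
where $\kappa_i$ has component $B_i^{-1}$ at every finite prime $\ell \neq p$ and component $1$ at $p$. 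The hypothesis $B_i \in K^1_\ell$ for $\ell \neq p$ is exactly what places $\kappa_i$ in $K_{\mathcal{F}_1}$, while $B_i \in \SSp(4,\Q)^+$ gives both $\lambda(B_i) = 1$ and $B_i^{-1} h \in \GSp(4,\R)^+$.

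Substituting this decomposition into the defining formula for $\Phi_F$ produces three simplifications. First, since $\lambda(B_i^{-1}) = 1$, every local factor of the character collapses: $\chi_{1,\ell}(\lambda((\kappa_i)_\ell)) = 1$ at every finite $\ell$. Second, $\lambda(B_i^{-1} h) = \lambda(h)$. Third, the factor of automorphy cocycle gives $j(B_i^{-1} h, I) = j(B_i^{-1}, Z)\, j(h, I)$ and $(B_i^{-1} h)\langle I \rangle = B_i^{-1}\langle Z \rangle$. The prefactors $\lambda(h)^{\pm k}$ and $j(h,I)^{\pm k}$ cancel, and one is left with
$$\mathcal{T}(F)(Z) \;=\; \sum_{i=1}^t c_i\, j(B_i^{-1}, Z)^{-k} F(B_i^{-1}\langle Z \rangle) \;=\; \sum_{i=1}^t c_i\, (F|_k B_i^{-1})(Z),$$
where $\lambda(B_i^{-1}) = 1$ is used once more to match the slash action. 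The only genuine obstacle is the adelic bookkeeping in the decomposition of $h_\infty B_{i,p}$; the triviality of the character factor rests crucially on $B_i$ lying in $\SSp$ rather than merely in $\GSp$, and the remaining algebra is just the standard cocycle identity.
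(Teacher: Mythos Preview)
Your proof is correct and follows essentially the same path as the paper's: both decompose $h_\infty B_{i,p} = B_i \cdot (B_i^{-1}h)_\infty \cdot \kappa_i$ using the diagonal embedding of $B_i^{-1}$, invoke $\lambda(B_i)=1$ to kill the character factors, and finish with the cocycle identity for $j$. The only cosmetic difference is that you phrase the evaluation as plugging into the defining formula for $\Phi_F$, whereas the paper invokes the left $\GSp(4,\Q)$-invariance and right $K_\ell$-equivariance of $\Phi_1$ separately; these amount to the same computation.
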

\begin{proof}
Let $F \in S_k(\Gamma_{\mathcal{F}_1})$. By the isomorphism \eqref{modtoautoeq} for the family $\mathcal{F}_1$ with character $\bm \chi_1$ we have $\Phi_1=\Phi_F \in S_k(K_{\mathcal{F}_1}, \bm \chi_1)$. 
Using the isomorphism \eqref{autotomodeq} for the family $\mathcal{F}_2$ with character $\bm \chi_2$, we calculate the composition $F_{T(\Phi_1)}$. Let $Z\in\mathfrak{H}_2$ and let $h\in\GSp(4,\R)^+$ be such that $h\langle I\rangle=Z$.  Then, using that $\lambda(B_i)=1$, we have that
\begin{align*}
F_{T(\Phi_1)}(Z)
&=\lambda(h)^{-k} j(h,I)^k \sum_{i=1}^t c_i \Phi_1 (h B_{i,p})\\
&=\lambda(h)^{-k} j(h,I)^k \sum_{i=1}^t c_i \Phi_1 (B_i^{-1} h B_{i,p})\\
&=\lambda(h)^{-k} j(h,I)^k \sum_{i=1}^t c_i \Phi_1 (B_{i,\infty}^{-1} h)\\
&=\lambda(h)^{-k} j(h,I)^k \sum_{i=1}^t c_i \lambda(B_i^{-1} h)^k j(B_i^{-1} h,I)^{-k} 
F ((B_i^{-1} h)\langle I \rangle  )\\
&= j(h,I)^k \sum_{i=1}^t c_i  j(B_i^{-1} , Z)^{-k} j( h,I)^{-k} 
F (  B_i^{-1} \langle Z \rangle  )\\
&= \sum_{i=1}^t c_i  j(B_i^{-1} , Z )^{-k} 
F (  B_i^{-1} \langle Z \rangle  )\\
&= \sum_{i=1}^t c_i (F|_k B_i^{-1})(Z).
\end{align*}
Hence,
$$
\mathcal{T}(F)=F_{T(\Phi_1)} =  \sum_{i=1}^t c_i\cdot F|_k B_i^{-1}.
$$
This completes the proof.
\end{proof}
\section{Proof of the Paramodular Twisting Theorem}
\label{maintheoremproofsubsec}

\begin{proof}[Proof of the \hyperlink{mainsiegeltheorem}{Paramodular Twisting Theorem}]
We will use Lemma \ref{operatorlemma}, Lemma \ref{heckeactionlemma}, and the \hyperlink{localtwisttheorem}{Local Twisting Theorem}.  Let $\bm \chi_1$ be the trivial Hecke character, and let ${\bm \chi_2}$ be the Hecke character corresponding to $\chi$, as in \eqref{chitochieq}. Let $\mathcal{F}_1=\{K^{\mathrm{para}}(\ell^{\val_\ell(N)}) \}_\ell$ and $\mathcal{F}_2=\{K^{\mathrm{para}}(\ell^{\val_\ell(Np^4)}) \}_\ell$.  Let $V$ be the $\C$ vector space of functions $\Phi:\GSp(4,\A) \to \C$ such that $\Phi \in V$ if and only if there exists a compact, open subgroup $\Gamma_1$ of $\GSp(4,\Q_p)$ such that $\Phi(gk)=\Phi(g)$ for $g \in \GSp(4,\A)$ and $k \in \Gamma_1$, and $\Phi(gz) = \Phi(g)$ for $g \in \GSp(4,\A)$ and $z \in \A^\times$. The group $\GSp(4,\Q_p)$ acts smoothly on $V$ by right translation, and for this action $\pi$ the center of $\GSp(4,\Q_p)$ acts trivially. Next, the operator $T_{\bm \chi_p}:V(0)\to V(4,\bm \chi_p)$ from the \hyperlink{localtwisttheorem}{Local Twisting Theorem} has the form 
$$
T_{\bm\chi_p}(\Phi) = \sum_{i=1}^t c_i \cdot \pi(B_i) \Phi
$$
where $c_i \in \C$ and $B_i \in K^{\mathrm{para}}(\ell^{\val_\ell(N)})$ for $\ell \neq p$, and we can arrange to have $B_i \in \SSp(4,\Q)$ for $i \in \{1,\dots,t\}$ by replacing $\OF$ by $\Z$, $\p^k$ by $p^k\Z$, $\varpi$ and $q$ by $p$. 
The space $S_k(K_{\mathcal{F}_1},\bm \chi_1)$ is contained in $V$, and moreover one verifies that the image of the restriction $T$ of $T_{\bm \chi_p}$ to $S_k(K_{\mathcal{F}_1},\bm \chi_1)$ is contained in $S_k(K_{\mathcal{F}_2},\bm \chi_2)$. By Lemma \ref{operatorlemma}, the map $\mathcal{T} : S_k(\Gamma^{\mathrm{para}}(N)) \to S_k(\Gamma^{\mathrm{para}}(Np^4))$ given by 
\eqref{TmathcalTeq} has the formula 
$$
\mathcal{T}(F)=  \sum_{i=1}^t c_i\cdot F|_k B_i^{-1}, \qquad F \in S_k(\Gamma^{\mathrm{para}}(N)).
$$
We let $\mathcal{T}_\chi=\mathcal{T}$; the formula in the statement of the theorem is a consequence of the expressions for $c_i$ and $B_i$ in  \hyperlink{localtwisttheorem}{Local Twisting Theorem}.  \end{proof}

\section{Proof of the $L$-function Theorem}\label{heckeoperators}
Throughout this section,
let $M$ and $k$ be positive integers, $\bm{\chi}$ a quadratic Hecke character, and $\ell$ a fixed rational prime.  We turn now to the paramodular family $\mathcal{F}=\{K^{\mathrm{para}}(\ell^{\val_\ell(M)}) \}_\ell$.  We will determine the explicit relationship between the Hecke operators for Siegel paramodular forms and those for twisted automorphic forms.  Let $\ell$ be a rational prime and define the Hecke operators $T(1,1,\ell,\ell)$ and $T(1,\ell,\ell,\ell^2)$ on $S_k(\Gamma^{\mathrm{para}}(M))$ by 
\begin{equation}
T(1,1,\ell,\ell)F=\ell^{k-3}\sum_i F|_ka_i, \qquad T(1,\ell,\ell,\ell^2)F=\ell^{2(k-3)}\sum_jF|_kb_j\label{heckeoperatordef}
\end{equation}
where
\begin{align}
\Gamma^{\mathrm{para}}(M)\begin{bmatrix}1&&&\\&1&&\\&&\ell&\\&&&\ell\end{bmatrix}\Gamma^{\mathrm{para}}(M)&=\bigsqcup\Gamma^{\mathrm{para}}(M)a_i \label{heckeelleq}\\
\intertext{and}
\Gamma^{\mathrm{para}}(M)\begin{bmatrix}1&&&\\&\ell&&\\&&\ell^2\\&&&\ell\end{bmatrix}\Gamma^{\mathrm{para}}(M)&=\bigsqcup\Gamma^{\mathrm{para}}(M)b_j\label{heckeellsquaredeq},
\end{align}
are disjoint decompositions.  We define the Atkin-Lehner involution $U_\ell$ on $S_k(\Gamma^{\mathrm{para}}(M))$ as follows.  Choose a matrix $\gamma_\ell\in\SSp(4,\Z)$ such that 
$$
\gamma_\ell\equiv\begin{bmatrix}&&&1\\&&1&\\&-1&&\\-1&&&\end{bmatrix}\bmod{\ell^{\val_\ell(M)}}\quad\text{and}\quad\gamma_\ell\equiv\begin{bmatrix}1&&&\\&1&&\\&&1&\\&&&1\end{bmatrix}\bmod{M\ell^{-\val_\ell(M)}}.
$$
Then, 
\begin{equation}\label{atkinlehnerdef}
U_\ell=\gamma_\ell\begin{bmatrix}\ell^{\val_\ell(M)}&&&\\&\ell^{\val_\ell(M)}&&\\&&1&\\&&&1\end{bmatrix}
\end{equation}
normalizes $\Gamma^{\mathrm{para}}(M)$ and $U_\ell^2$ is contained in $\ell^{\val_\ell(M)}\Gamma^{\mathrm{para}}(M)$, implying that $F\mapsto F|_kU_\ell$ is indeed an involution of $S_k(\Gamma^{\mathrm{para}}(M))$.  

\begin{lemma}\label{heckedecompositionlemma}
Let $M$ be a positive integer, let $\ell$ be a prime and set $r=\mathrm{val}_\ell(M)$.  There exist finite disjoint decompositions
\begin{align*}
&K^{\mathrm{para}}(\ell^r)\begin{bmatrix}\ell&&&\\&\ell&&\\&&1&\\&&&1\end{bmatrix}K^{\mathrm{para}}(\ell^r)=\bigsqcup g_iK^{\mathrm{para}}(\ell^r)\\\intertext{and}
&K^{\mathrm{para}}(\ell^r)\begin{bmatrix}\ell^2&&&\\&\ell&&\\&&1&\\&&&\ell\end{bmatrix}K^{\mathrm{para}}(\ell^r)=\bigsqcup h_jK^{\mathrm{para}}(\ell^r)\\\intertext{such that}
&\Gamma^\mathrm{para}(M)\begin{bmatrix}1&&&\\&1&&\\&&\ell&\\&&&\ell\end{bmatrix}\Gamma^\mathrm{para}(M)=\bigsqcup\Gamma^\mathrm{para}(M)\ell g_i^{-1}\\\intertext{and}
&\Gamma^\mathrm{para}(M)\begin{bmatrix}1&&&\\&\ell&&\\&&\ell^2&\\&&&\ell\end{bmatrix}\Gamma^\mathrm{para}(M)=\bigsqcup\Gamma^\mathrm{para}(M)\ell^2 {h_j}^{-1}.
\end{align*}
The explicit forms of the representatives are given in the proof.
\end{lemma}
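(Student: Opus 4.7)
The plan splits into two parts: first establish the two local double coset decompositions at $\ell$, then transfer them to the global decompositions by means of strong approximation for $\SSp(4)$.

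For the local decompositions, I would compute explicit single coset representatives by exploiting the Iwahori-type filtration of the paramodular group together with symplectic-form-preserving column operations. Each representative $g_i$ in the first decomposition can be taken in upper-triangular form with diagonal essentially $(\ell,\ell,1,1)$ and off-diagonal entries ranging over chosen representatives of $\Z_\ell/\ell^s\Z_\ell$ for appropriate $s$; the symplectic condition ${}^tgJg=\lambda(g)J$ restricts which positions may carry independent entries. An analogous but longer enumeration handles $\mathrm{diag}(\ell^2,\ell,1,\ell)$. In each case the number of representatives produced must equal the degree of the Hecke operator (which I would compute as the index $[K^{\mathrm{para}}(\ell^r):K^{\mathrm{para}}(\ell^r)\cap gK^{\mathrm{para}}(\ell^r)g^{-1}]$); together with pairwise inequivalence modulo right multiplication by $K^{\mathrm{para}}(\ell^r)$, this matching count would certify that the decomposition is complete and disjoint.

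For the global statement, the key algebraic manipulation is inversion together with rescaling. Inverting a right-coset decomposition converts it to a left-coset decomposition, giving
$$K^{\mathrm{para}}(\ell^r)\,\mathrm{diag}(\ell^{-1},\ell^{-1},1,1)\,K^{\mathrm{para}}(\ell^r)=\bigsqcup K^{\mathrm{para}}(\ell^r)\,g_i^{-1},$$
and multiplying through by the central scalar $\ell$ (which has multiplier $\ell^2$) produces a decomposition of $K^{\mathrm{para}}(\ell^r)\,\mathrm{diag}(1,1,\ell,\ell)\,K^{\mathrm{para}}(\ell^r)$ by left cosets $K^{\mathrm{para}}(\ell^r)\,\ell g_i^{-1}$. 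The parallel computation with scalar $\ell^2$ handles $\mathrm{diag}(1,\ell,\ell^2,\ell)$. The passage from local to global then rests on strong approximation for $\SSp(4)$ combined with the surjectivity $\lambda(K^{\mathrm{para}}(\ell^{\prime\,\val_{\ell'}(M)}))=\Z_{\ell'}^\times$ for every prime $\ell'$; this yields a bijection between the global $\Gamma^{\mathrm{para}}(M)$-double coset in question and its local counterpart at $\ell$, provided the representatives are lifted to $\GSp(4,\Q)$ with trivial behavior (meaning inclusion in $K^{\mathrm{para}}(\ell^{\prime\,\val_{\ell'}(M)})$) at every prime $\ell'\neq\ell$, which is possible by weak approximation once the $\ell$-adic behavior is fixed.

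The main obstacle will be the bookkeeping of enumerating all the local representatives and verifying their pairwise distinctness modulo $K^{\mathrm{para}}(\ell^r)$, especially for the second double coset where more parameters appear; this is the sort of calculation the paper indicates is carried out explicitly. Once the local decompositions are in hand with clean representatives, the passage to the global statement is essentially formal, and the inversion-and-rescaling step explains why the global representatives take the form $\ell g_i^{-1}$ and $\ell^2 h_j^{-1}$ rather than $g_i$ and $h_j$ themselves.
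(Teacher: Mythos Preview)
Your outline is sound and would work, but it differs from the paper's route in two places. First, the paper does not derive the local coset representatives; it simply quotes them from \cite{RS}, Section~6.1, writing them out explicitly for $r=0$ and for $r\geq 1$. Second, and more substantively, the paper does not invoke strong approximation as an abstract bijection principle. Instead it argues concretely with the explicit matrices: since each $g_i$ visibly lies in $K^{\mathrm{para}}(q^{\val_q(M)})$ for every $q\neq\ell$, one checks by hand (using a few named symmetry elements of $\Gamma^{\mathrm{para}}(M)$) that $\ell g_i^{-1}$ lands in the global double coset, and then matches the coset counts by the elementary observation that if two global representatives become equivalent in $K^{\mathrm{para}}(\ell^r)$ they were already equivalent in $\Gamma^{\mathrm{para}}(M)$, because their ratio already lies in every other local paramodular group. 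Your inversion-and-rescaling step is exactly what underlies the appearance of $\ell g_i^{-1}$ and $\ell^2 h_j^{-1}$, so that part matches; the difference is that your transfer from local to global is packaged as ``strong approximation plus weak approximation,'' whereas the paper's is a direct coset-count comparison with no approximation theorems named. Both arguments are really the same mechanism, but the paper's version is more self-contained and makes the role of the explicit representatives clearer.

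One small correction: the local representatives cannot all be taken upper-triangular. For $r\geq 1$ the decomposition from \cite{RS} requires the element $\left[\begin{smallmatrix}&&-\ell^{-r}&\\&1&&\\\ell^r&&&\\&&&1\end{smallmatrix}\right]$, and even for $r=0$ the second double coset has representatives with lower-unipotent factors. This does not affect your strategy, but you would encounter it once you actually enumerate.
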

\begin{proof}
The explicit form of the decompositions given in Section 6.1 of \cite{RS} implies that we can choose representatives $g_i$ and $h_j$ such that 
\begin{align*}
&K^{\mathrm{para}}(\ell^r)\begin{bmatrix}\ell&&&\\&\ell&&\\&&1&\\&&&1\end{bmatrix}K^{\mathrm{para}}(\ell^r)=\bigsqcup_{i=1}^Dg_iK^{\mathrm{para}}(\ell^r),\\
&K^{\mathrm{para}}(\ell^r)\begin{bmatrix}\ell^2&&&\\&\ell&&\\&&1&\\&&&\ell\end{bmatrix}K^{\mathrm{para}}(\ell^r)=\bigsqcup_{j=1}^{D'} h_jK^{\mathrm{para}}(\ell^r),\\
\intertext{and}
&\ell g^{-1}_i\in\Gamma^{\mathrm{para}}(M)\begin{bmatrix}1&&&\\&1&&\\&&\ell&\\&&&\ell\end{bmatrix}\Gamma^{\mathrm{para}}(M),\quad
\ell^2h_j^{-1}\in\Gamma^{\mathrm{para}}(M)\begin{bmatrix}1&&&\\&\ell&&\\&&\ell^2&\\&&&\ell\end{bmatrix}\Gamma^{\mathrm{para}}(M),
\end{align*}
for $i\in\{1,\dots, D\}$ and $j\in\{1,\dots,D'\}$.  For this, it is useful to note that $\Gamma^{\mathrm{para}}(M)$ contains several symmetry elements: 
$$
\begin{bmatrix}1&&&\\&&&1\\&&1&\\&-1&&&\end{bmatrix},\qquad\begin{bmatrix}&&-M^{-1}&\\&1&&\\M&&&\\&&&1\end{bmatrix},
$$  and in the case that $r=0$,  the element
$\left[\begin{smallmatrix}A&\\&{}^tA^{-1}\end{smallmatrix}\right]$
where $A\in\Gamma_0(M)\subset\SL(2,\Z)$ and $A\left[\begin{smallmatrix}1&\\&\ell\end{smallmatrix}\right]A^{-1}=\left[\begin{smallmatrix}\ell&\\&1\end{smallmatrix}\right]$.  It follows that the cosets $\Gamma^{\mathrm{para}}(M)\ell g_i^{-1}$ are mutually disjoint and contained in the first double coset, and the cosets $\Gamma^{\mathrm{para}}(M)\ell^2h_j^{-1}$ are mutually disjoint and contained in the second double coset.  It remains to prove that the number of disjoint cosets in the first and second double cosets are $D$ and $D'$, respectively.  Suppose that 
$$
\Gamma^{\mathrm{para}}(M)\begin{bmatrix}1&&&\\&1&&\\&&\ell&\\&&&\ell\end{bmatrix}\Gamma^{\mathrm{para}}(M)=\bigsqcup_{i=1}^d\Gamma^{\mathrm{para}}(M)g'_i
$$
is a disjoint decomposition.  We have already shown that $d\geq D$;  we need to prove that $D\geq d$.  We have
$$
K^{\mathrm{para}}(\ell^r)\begin{bmatrix}1&&&\\&1&&\\&&\ell&\\&&&\ell\end{bmatrix}K^{\mathrm{para}}(\ell^r)\supset K^{\mathrm{para}}(\ell^r)\begin{bmatrix}1&&&\\&1&&\\&&\ell&\\&&&\ell\end{bmatrix}\Gamma^{\mathrm{para}}(M)=\bigcup_{i=1}^dK^{\mathrm{para}}(\ell^r)g'_i.
$$
We claim that these cosets are disjoint.  For suppose $K^{\mathrm{para}}(\ell^r)g'_i=K^{\mathrm{para}}(\ell^r)g'_j$.  This implies that $g'_ig'_j{}^{-1}\in K^{\mathrm{para}}(\ell^r)$.  Since for any prime $q$ with $q\neq\ell$ we have $g'_ig'_j{}^{-1}\in K^{\mathrm{para}}(q^{\val_q(M)})$ as all the elements of 
$$
\Gamma^{\mathrm{para}}(M)\begin{bmatrix}1&&&\\&1&&\\&&\ell&\\&&&\ell\end{bmatrix}\Gamma^{\mathrm{para}}(M)
$$
are contained in $K^{\mathrm{para}}(q^{\val_q(M)})$, we have  $g'_ig'_j{}^{-1}\in\Gamma^{\mathrm{para}}(M)$.  This implies that $i=j$.  It follows that $D\geq d$.  The proof that $d'=D'$ is similar.
\end{proof}

Let $V$ be the $\C$ vector space of functions $\Phi:\GSp(4,\A) \to \C$ such that $\Phi \in V$ if and only if there exists a compact, open subgroup $\Gamma_1$ of $\GSp(4,\Q_\ell)$ such that $\Phi(gk)=\Phi(g)$ for $g \in \GSp(4,\A)$ and $k \in \Gamma_1$, and $\Phi(gz) = \Phi(g)$ for $g \in \GSp(4,\A)$ and $z \in \A^\times$. The group $\GSp(4,\Q_\ell)$ acts smoothly on $V$ by right translation, and for this action, denoted by $\pi$, the center of $\GSp(4,\Q_\ell)$ acts trivially.  Assume that $\bm\chi_\ell$ is unramified.  Then $S_k(K_{\mathcal{F}},\bm\chi)\subset V(\val_\ell(M))$, where the last space consists of the vectors in $V$ that are fixed by $K^{\mathrm{para}}(\ell^{\val_\ell(M)})$ as in \cite{RS}.  Let $T_{1,0}$ and $T_{0,1}$ be the local Hecke operators and $u_{\val_{\ell}(M)}$ be the local Atkin-Lehner operator acting on $V(\val_\ell(M))$ as in \cite{RS}.  These operators preserve the subspace $S_k(K_{\mathcal{F}},\bm\chi)\subset V(\val_\ell(M))$.

\begin{lemma}\label{heckeactionlemma}
Assume that $\bm\chi_\ell$ is unramified. Let $\Phi\in S_k(K_\mathcal{F},\bm{\chi})$ and let $F_\Phi\in S_k(\Gamma^{\mathrm{para}}(M))$ be the Siegel modular form corresponding to $\Phi$ under the isomorphism \eqref{autotomodeq} of Lemma \ref{KFisolemma}. Then,
\begin{enumerate}
\item $T(1,1,\ell,\ell)F_\Phi=\ell^{k-3}\bm\chi_{\ell}(\ell)F_{T_{1,0}\Phi}$,
\item $T(1,\ell,\ell,\ell^2)F_\Phi=\ell^{2(k-3)}F_{T_{0,1}\Phi}$,
\item $F|_kU_\ell=\bm\chi_\ell(\ell)^{\val_\ell(M)}F_{u_{\val_\ell(M)}\Phi}.$
\end{enumerate}
\end{lemma}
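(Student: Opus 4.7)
My proof of all three parts will follow the template of Lemma~\ref{operatorlemma}: a right-translation operator at $\ell$ on the automorphic side transfers to a slash action on the Siegel side via Lemma~\ref{KFisolemma}. The subtlety is that the natural global representatives provided by Lemma~\ref{heckedecompositionlemma} have nontrivial multiplier, so Lemma~\ref{operatorlemma} cannot be applied verbatim; the central character has to be tracked by hand, and this is precisely what produces the factors $\bm\chi_\ell(\ell)$ in~(1) and~(3).

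For part~(1), I would write $T_{1,0}\Phi = \sum_i R(g_{i,\ell})\Phi$, where $g_i \in \GSp(4,\Q)$ are the rational matrices from Lemma~\ref{heckedecompositionlemma}; by that same lemma the global coset representatives defining $T(1,1,\ell,\ell)$ may be taken to be $a_i = \ell g_i^{-1}$. Given $h_\infty \in \GSp(4,\R)^+$ with $h_\infty\langle I\rangle = Z$, I would insert the diagonal embedding of $g_i^{-1}\in \GSp(4,\Q)$ on the left and use left $\GSp(4,\Q)$-invariance of $\Phi_F$ to write $\Phi_F(h_\infty g_{i,\ell}) = \Phi_F((g_i^{\mathrm{diag}})^{-1} h_\infty g_{i,\ell})$. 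The resulting adelic element has trivial $\ell$-component, infinite component $g_i^{-1}h_\infty$, and component $g_{i,\ell'}^{-1}$ at each other finite prime $\ell'$. A direct check on the explicit shape of $g_i$ shows that $g_{i,\ell'}^{-1}$ lies in $K^{\mathrm{para}}(\ell'^{\val_{\ell'}(M)})$, since the denominators of $g_i$ involve only $\ell$, the multiplier $\lambda(g_i^{-1})=\ell^{-1}$ is an $\ell'$-adic unit, and the upper-triangular block structure visible in the proof of Lemma~\ref{heckedecompositionlemma} meets the required congruence conditions. Property~(3) of $\Phi_F$ then extracts the character factor $\prod_{\ell'\ne\ell}\chi_{\ell'}(\ell^{-1})$, which by the product formula $\bm\chi(\ell)=1$, the triviality of $\chi_\infty$ on $\R_{>0}^\times$, and $\bm\chi^2=1$ equals $\bm\chi_\ell(\ell)$. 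Unfolding the infinite component exactly as in the proof of Lemma~\ref{operatorlemma} yields
\[
F_{T_{1,0}\Phi_F} \;=\; \bm\chi_\ell(\ell)\,\sum_i F|_k\, g_i^{-1}.
\]
Since scalars act trivially under $|_k$, the definition gives $T(1,1,\ell,\ell)F = \ell^{k-3}\sum_i F|_k(\ell g_i^{-1}) = \ell^{k-3}\sum_i F|_k g_i^{-1}$, and~(1) follows after using $\bm\chi_\ell(\ell)^2=1$.

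Part~(2) is identical with $h_j$ in place of $g_i$, except that $\lambda(h_j) = \ell^2$, so the accumulated character factor is $\prod_{\ell'\ne\ell}\chi_{\ell'}(\ell^{-2}) = 1$ by quadraticity, accounting for the absence of a $\bm\chi_\ell$ in~(2). For part~(3), the congruence conditions imposed on $\gamma_\ell$ in~\eqref{atkinlehnerdef} were chosen precisely so that, when $U_\ell$ is diagonally embedded in $\GSp(4,\A)$, its components at primes $\ell'\ne\ell$ lie in $K^{\mathrm{para}}(\ell'^{\val_{\ell'}(M)})$ and its $\ell$-component differs from the local Atkin-Lehner element of~\cite{RS} by an element of $K^{\mathrm{para}}(\ell^{\val_\ell(M)})$. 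The same multiplier bookkeeping, now with $\lambda(U_\ell) = \ell^{\val_\ell(M)}$, produces the factor $\bm\chi_\ell(\ell)^{\val_\ell(M)}$. The main obstacle will be this verification in part~(3): one must pin down the conventions of~\cite{RS} for the local Atkin-Lehner element and carry out the comparison with the diagonal embedding of $U_\ell$, a fairly explicit computation that nonetheless introduces no new ideas beyond those used for parts~(1) and~(2).
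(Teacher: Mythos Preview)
Your proposal is correct and follows essentially the same route as the paper. Both arguments use the rational coset representatives $g_i$ from Lemma~\ref{heckedecompositionlemma}, insert them diagonally to pass between the archimedean and $\ell$-adic components via left $\GSp(4,\Q)$-invariance, observe that $g_i\in K^{\mathrm{para}}(q^{\val_q(M)})$ for $q\neq\ell$, and extract the character factor $\prod_{q\neq\ell}\chi_q(\ell)=\bm\chi_\ell(\ell)$ via the product formula and quadraticity; the only cosmetic difference is that the paper computes $T(1,1,\ell,\ell)F_\Phi$ directly and identifies it with $\ell^{k-3}\bm\chi_\ell(\ell)F_{T_{1,0}\Phi}$, whereas you compute $F_{T_{1,0}\Phi}$ first and then match it with the Hecke operator, and for parts~(2) and~(3) the paper simply says ``similar'' while you spell out the multiplier bookkeeping.
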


\begin{proof}
Let $Z\in\mathfrak{H}_2$ and let $g_\infty\in\SSp(4,\R)$ such that $g_\infty\langle I\rangle=Z$. For the first assertion, we will use the coset representatives from Lemma \ref{heckedecompositionlemma}.  Note that from the explicit forms given in the proof, we have that $g_i\in K^{\mathrm{para}}(q^{\val_q(M)})$ for primes $q\neq\ell$ and $\lambda(g_i)=\ell$ for each $i$. We calculate
\begin{align*}
(T(1,1,\ell,\ell)F_\Phi)(Z)&=\ell^{k-3}\sum_i(F_\Phi|_k\ell g_i^{-1})(Z)\\
&=\ell^{k-3}\sum_i\lambda(\ell g_i^{-1})^kj(\ell g_i^{-1}, g_\infty\langle I\rangle)^{-k} F_\Phi(\ell g_i^{-1}g_\infty\langle I\rangle)\\
&=\ell^{k-3}\sum_i\lambda(\ell g_i^{-1})^kj(\ell g_i^{-1}, g_\infty\langle I\rangle)^{-k}\lambda(\ell g_i^{-1}g_\infty)^{-k}j(\ell g_i^{-1}g_\infty,I)^k\Phi((\ell g_i^{-1})_\infty g_\infty)\\
&=\ell^{k-3}\sum_i\lambda(g_\infty)^{-k}j(g_\infty,I)^k\Phi(g_{i,\infty}^{-1}\, g_\infty)\\
&=\ell^{k-3}\sum_i\lambda(g_\infty)^{-k}j(g_\infty,I)^k\Phi(g_{i,f}\, g_\infty)\\
&=\ell^{k-3}\sum_i\lambda(g_\infty)^{-k}j(g_\infty,I)^k\prod_{q\neq\ell}\bm\chi_q(\lambda(g_i))\Phi(g_\infty\, g_{i,\ell})\\
&=\ell^{k-3}\bm\chi_\ell(\ell)\sum_i\lambda(g_\infty)^{-k}j(g_\infty,I)^k\bm\Phi(g_\infty\, g_{i,\ell})\\
&=\ell^{k-3}\bm\chi_\ell(\ell)\lambda(g_\infty)^{-k}j(g_\infty,I)^k(T_{1,0}\Phi)(g_\infty)\\
&=\ell^{k-3}\bm\chi_{\ell}(\ell)(F_{T_{1,0}}\Phi)(Z).
\end{align*}
The proof of the second and third assertions are similar.
\end{proof}
\begin{proof}[Proof of  the \hyperlink{heckeoperatortheorem}{$L$-function Theorem}] To prove (1),
let $F\in S_k(\Gamma^{\mathrm{para}}(N))$  and let $\Phi\in S_k(K_{\mathcal{F}_1},\bm\chi_1)$ with $F=F_\Phi$.  Let $\ell$ be a rational prime with $\ell\neq p$ so that $\bm\chi_{1,\ell}$ and $\bm\chi_{2,\ell}$ are unramified.  Moreover, we have that $\bm\chi_{1,\ell}$ is trivial and  $\bm\chi_{2,\ell}(\ell)=\chi(\ell)$.  Then, using \eqref{TmathcalTeq}  and Lemma \ref{heckeactionlemma} we have
\begin{align*}
T(1,1,\ell,\ell)\mathcal{T}_\chi(F_\Phi)&=T(1,1,\ell,\ell)F_{T(\Phi)}\\
&=\ell^{k-3}\chi(\ell)F_{T_{1,0}(\ell)T(\Phi)}\\
&=\ell^{k-3}\chi(\ell)F_{T(T_{1,0}(\ell)\Phi)}\\
&=\chi(\ell)\mathcal{T}_\chi(\ell^{k-3}F_{T_{1,0}(\ell)\Phi})\\
&=\chi(\ell)\mathcal{T}_\chi(T(1,1,\ell,\ell)F_\Phi).
\end{align*}
The proofs for the other two operators are similar.

 To prove (2), we recall from the Paramodular Twisting Theorem that $\mathcal{T}_\chi(F)$  is in $S_k(\Gamma^{\mathrm{para}}(Np^4))$.  To show that $\mathcal{T}_\chi(F)$ is a newform, consider the automorphic form associated to $F$, $\Phi_{F}$, as defined in Lemma \ref{KFisolemma}.  Let $V$ be the subspace of cusp forms on $\GSp(4,\A)$ generated by $\Phi_F$ under the action of $\GSp(4,\A)$.  The space $V$ is a direct sum of finitely many non-zero irreducible $\GSp(4,\A)$-subspaces $V_1,\cdots,V_t$, so that $\Phi_F=\sum_{i=1}^t\Phi_i$ with $\Phi_i\in V_i$.  Let $V_i\simeq\otimes_{\ell\leq\infty}V_{i,\ell}$ where $V_i$ is an irreducible, admissible representation of $\GSp(4,\Q_\ell)$.
Let $F_{\Phi_i}\in S_k(\Gamma^\mathrm{para}(N))$ be the Siegel modular forms associated to the $\Phi_i$ by Lemma \ref{KFisolemma}; then these are also newforms.  In particular, $\Phi_i=\otimes_{\ell\leq\infty}\Phi_{i,\ell}$ is a pure tensor with $\Phi_{i,\ell}\in V_{i,\ell}$ a local newform for $\ell<\infty$.  As in the proof of the Paramodular Twisting Theorem, $\Phi_{\mathcal{T}_\chi(F)}={T_{\bm\chi_p}\Phi_F}$, and thus $\Phi_{\mathcal{T}_\chi(F)}=\sum_{i=1}^tT_{\bm\chi_p}\Phi_i=\sum_{i=1}^t(\otimes_{\ell\neq p}\Phi_{i,\ell})\otimes T_{\bm\chi_p}\Phi_{i,p}$.  If $T_{\bm\chi_p}\Phi_{i,p}\neq0$, then by the tables in Appendix A of \cite{RS}, $V_{i,p}$ is an unramified Type I representation, $\bm\chi_p\otimes V_{i,p}$ is paramodular of level $p^4$, and $T_{\bm\chi_p}\Phi_{i,p}$ is a local newform inside $\bm\chi_p\otimes V_{i,p}$.  From this and tables A.12 and A.14 in \cite{RS}, the eigenvalues for $\ell=p$ are as stated above.  In the case that $\ell\neq p$, the eigenvalues are determined by the commutation relations given in (1).  The final assertion then follows from the definition of the involved local $L$-factors.
\end{proof}

\section{Proof of the Local Twisting Theorem}
\label{localsec}
Throughout the remainder of the paper, we will use the following notation. Let $F$ be a nonarchimedean local field of characteristic zero and odd residual characteristic with ring of integers $\OF$ and generator $\varpi$ of the maximal ideal $\p$ of $\OF$. We let $q$ be the number of elements of $\OF/\p$ and use the absolute value on $F$ such that $|\varpi|=q^{-1}$. We use the Haar measure on the additive group $F$ that assigns $\OF$ measure $1$ and the Haar measure on the multiplicative group $F^\times$ that assigns $\OF^\times$ measure $1-q^{-1}$. We let
$\chi$ be a quadratic character of $F^\times$ of conductor $\p$. Let 
$$
J'=\begin{bmatrix}&&&1 \\ &&1& \\ &-1&& \\ -1&&& \end{bmatrix}.
$$
For the next two sections, we define  $\GSp(4,F)$ as the subgroup of all $g\in\GL(4,F)$ such that ${}^tgJ'g=\lambda(g)J'$ for some $\lambda(g)\in F^\times$ called the multiplier of $g$. For $n$ a non-negative integer, we let $\mathrm{K}(\p^n)$ be the subgroup of $k \in \GSp(4,F)$ such that $\lambda(k) \in \OF^\times$ and
$$
k \in \begin{bmatrix} \OF&\OF&\OF&\p^{-n} \\ \p^n & \OF & \OF & \OF \\ \p^n & \OF & \OF & \OF \\ \p^n & \p^n & \p^n & \OF\end{bmatrix}.
$$
Let $(\pi,V)$ be a smooth representation of $\GSp(4,F)$ for which the center acts trivially.  If $n$ is a non-negative integer, then $V(n)$ is the subspace of vectors fixed by the paramodular subgroup $\mathrm{K}(\p^n)$; also, we let $V(n,\chi)$ be the subspace of vectors $v \in V$ such that $\pi(k)v = \chi(\lambda(k)) v$ for $k \in \mathrm{K}(\p^n)$.  Finally, let 
$$
\eta=\begin{bmatrix} \varpi^{-1} &&& \\ &1&& \\ &&1& \\ &&& \varpi \end{bmatrix} ,\qquad
\tau = \begin{bmatrix} 1&&& \\ &\varpi^{-1}&& \\ &&\varpi& \\ &&&1 \end{bmatrix},\qquad t_4=\begin{bmatrix}&&&-\varpi^{-4}\\&1&&\\&&1&\\\varpi^4&&& \end{bmatrix}.
$$
Usually, we will write $\eta$ and $\tau$ for $\pi(\eta)$ and $\pi(\tau)$, respectively.

In \cite{JR1} we constructed a twisting map,
\begin{equation}\label{localtwistmapeq}
T_\chi:V(0)\rightarrow V(4,\chi),
\end{equation}
given by
\begin{align}
T&_\chi(v) = 
q^3\int\limits_{\OF}\int\limits_{\OF}\int\limits_{\OF^\times}\int\limits_{\OF^\times}
\chi(ab)\pi(
\begin{bmatrix} 1&&& \\ &1&& \\ &x&1& \\ &&&1 \end{bmatrix} 
\begin{bmatrix} 1&-a\varpi^{-1} & b\varpi^{-2} & z\varpi^{-4} \\ &1&&b\varpi^{-2} \\ &&1&a\varpi^{-1} \\ &&&1 \end{bmatrix} )
\tau v\dif a\dif b\dif x\dif z\tag{P1}\label{termoneeq} \\
&+q^3\int\limits_{\OF}\int\limits_{\p}\int\limits_{\OF^\times}\int\limits_{\OF^\times}
\chi(ab)\pi(\!
\begin{bmatrix} 1&&& \\ &&1& \\ &-1&& \\ &&&1 \end{bmatrix}\!\!\!
\begin{bmatrix} 1&&& \\ &1&& \\ &y&1& \\ &&&1 \end{bmatrix}\!\!\!  
\begin{bmatrix} 1&-a\varpi^{-1} & b\varpi^{-2} & z\varpi^{-4} \\ &1&&b\varpi^{-2} \\ &&1&a\varpi^{-1} \\ &&&1 \end{bmatrix}\! )
\tau v\dif a\dif b\dif y\dif z\tag{P2}\label{termtwoeq} \\
&+q^2\!\int\limits_{\OF}\int\limits_{\OF}\int\limits_{\OF^\times}\int\limits_{\OF^\times} \chi(ab)
\pi(t_4
\begin{bmatrix} 1&&& \\ &1&& \\ &x&1& \\ &&&1 \end{bmatrix} 
\begin{bmatrix} 1&-a\varpi^{-1} & b\varpi^{-2} & z\varpi^{-3} \\ &1&&b\varpi^{-2} \\ &&1&a\varpi^{-1} \\ &&&1 \end{bmatrix} )
\tau v\dif a\dif b\dif x\dif z\tag{P3} \label{termthreeeq} \\
&+q^2\!\!\int\limits_{\OF}\!\int\limits_{\p}\!\int\limits_{\OF^\times} \int\limits_{\OF^\times}\chi(ab)
\pi(\!t_4
\begin{bmatrix} 1&&& \\ &&1& \\ &-1&& \\ &&&1 \end{bmatrix} \!\!\!
\begin{bmatrix} 1&&& \\ &1&& \\ &y&1& \\ &&&1 \end{bmatrix}\!\!\!
\begin{bmatrix} 1&-a\varpi^{-1} & b\varpi^{-2} & z\varpi^{-3} \\ &1&&b\varpi^{-2} \\ &&1&a\varpi^{-1} \\ &&&1 \end{bmatrix} )
\tau v\dif a\dif b\dif y\dif z. \tag{P4}\label{termfoureq}
\end{align}
 The \hyperlink{localtwisttheorem}{Local Twisting Theorem} asserts that the twisting map $T_\chi$ may be expressed in terms of upper triangular matrices.  Indeed, by the Iwasawa decomposition $\GSp(4,F)=B\cdot\GSp(4,\OF)$ where $B$ is the Borel subgroup of upper-triangular matrices in $\GSp(4,F)$.  Hence, if  $v\in V(0)$, so that $v$ is invariant under $\GSp(4,\OF)$, then it is possible to obtain a formula for $T_\chi(v)$ involving only upper-triangular matrices.  However, the Iwasawa decomposition does not give an algorithm for finding this decomposition, nor are these decompositions necessarily universal for a family of elements of $\GSp(4,F)$ depending on a parameter.

To attack this problem we use the following strategies.  In some cases, we directly provide an Iwasawa identity $g=bk$ where $g\in\GSp(4,F)$, $b\in B$, and $k\in \GSp(4,\OF)$.  In many cases, we are able to obtain an appropriate Iwasawa identity by using the following formal matrix identity
\begin{equation}\label{flipup}
\begin{bmatrix}1&\\x&1\end{bmatrix}=\begin{bmatrix}1&x^{-1}\\&1\end{bmatrix}\begin{bmatrix}-x^{-1}&\\&-x\end{bmatrix}\begin{bmatrix}&1\\-1&\end{bmatrix}\begin{bmatrix}1&x^{-1}\\&1\end{bmatrix}.
\end{equation}
Both methods require that we decompose the domains of integration in an advantageous manner and make appropriate changes of variables.  The assumptions on the character, $\chi\neq1$, $\chi^2=1$ and $\chi(1+\p)=1$, also play a significant role in the computations.

The following proof of the \hyperlink{localtwisttheorem}{Local Twisting Theorem} assembles calculations of the four terms of $T_\chi(v)$. The terms themselves are calculated in a series of lemmas which appear after the proof.  The proofs of the lemmas then appear in Section \ref{lemmaproofsec}.  The third term is by far the most delicate, and its calculation comprises the bulk of the remaining pages of this manuscript.  
\begin{proof}[Proof of the \hyperlink{localtwisttheorem}{Local Twisting Theorem}]
Substituting the formula for \eqref{termoneeq} from Lemma \ref{termonelemma}, \eqref{termtwoeq} from Lemma \ref{termtwolemma}, \eqref{termthreeeq} from Lemma \ref{termthreelemma}, and \eqref{termfoureq} from Lemma \ref{termfourlemma}, we have that the twisting operator is given by the formula
\begin{align*}
&T_\chi(v)=\\
&q^2
\int\limits_{\OF} \int\limits_{\OF^\times}\int\limits_{\OF^\times}\int\limits_{\OF^\times}\chi(ab)   \pi(
\begin{bmatrix} 1 & -(a+xb)\varpi^{-1}  & & \\ &1&& \\ &&1&(a+xb) \varpi^{-1} \\ &&&1 \end{bmatrix} 
\begin{bmatrix} 1 &   & b\varpi^{-2} & z \varpi^{-4} \\ &1&x^{-1}\varpi^{-1}&b\varpi^{-2} \\ &&1& \\ &&&1 \end{bmatrix} ) v \dif a \dif b \dif x \dif z\\
&+q\chi(-1)\eta\!\int\limits_{\OF^\times-(1+\p)}\int\limits_{\OF^\times}\int\limits_{\OF^\times} \int\limits_{\OF^\times - A(z)}\chi(abx)
\pi(\begin{bmatrix}
1&b \varpi^{-1} &&\\
&1&&\\
&&1&-b\varpi^{-1} \\
&&&1
\end{bmatrix}\\
&\begin{bmatrix}
1&&a\varpi^{-2} & -abx^{-1}(1+x-z)\varpi^{-3} \\
&1& -ab^{-1}xz (1-z +zx)^{-1}\varpi^{-1} & a\varpi^{-2} \\
&&1&\\
&&&1
\end{bmatrix}
) v\dif x\dif a\dif b\dif z\\
&+\chi(-1)\eta\!\int\limits_{\OF^\times-(1+\p)}\int\limits_{\OF^\times}\int\limits_{\OF^\times} \chi(b(1-z))
\pi(\begin{bmatrix}
1&&a\varpi^{-2}&-b\varpi^{-3}\\
&1&-a^2b^{-1}z\varpi^{-1}&a\varpi^{-2}\\
&&1&\\
&&&1
\end{bmatrix})v\dif a\dif b\dif z\\
&+q\eta\int\limits_{\OF}\int\limits_{\OF^\times}\int\limits_{\OF^\times} \chi(b)
\pi(\begin{bmatrix} 1& x\varpi^{-2}& & \\ &1&&\\ &&1&-x\varpi^{-2}\\ & &&1 \end{bmatrix}\begin{bmatrix} 1& & a\varpi^{-2}&(b\varpi-ax)\varpi^{-4} \\ &1&&a\varpi^{-2} \\ &&1&\\& &&1 \end{bmatrix}  )
v\dif a\dif b\dif x\\
&+\eta\int\limits_{\OF}\!\int\limits_{\OF^\times} \int\limits_{\OF^\times}\chi(ab)
\pi(\begin{bmatrix}
1&-a\varpi^{-2}&&\\
&1&&\\
&&1&a\varpi^{-2}\\
&&&1
\end{bmatrix}\begin{bmatrix}
1&&y\varpi^{-1}&a(y+b)\varpi^{-3}\\
&1&&y\varpi^{-1} \\
&&1&\\
&&&1
\end{bmatrix}
) v\dif a\dif b\dif y\\
&+q^{-1}\chi(-1)\eta^2\int\limits_{\OF^\times}\int\limits_{\OF^\times}\int\limits_{\OF^\times} \chi(bx)
\pi(
\begin{bmatrix}
1&&a\varpi^{-2}&  b(1+x\varpi)\varpi^{-2}\\
&1&a^2b^{-1}\varpi^{-2}&a\varpi^{-2}\\
&&1&\\
&&&1
\end{bmatrix} )
v\dif a\dif b\dif x\\
&+q\tau\int\limits_{\OF} \int\limits_{\OF^\times}\int\limits_{\OF^\times}\chi(ab)  
\pi( \begin{bmatrix} 1 & -a\varpi^{-2} & & \\ &1&& \\ &&1&a\varpi^{-2} \\ &&&1 \end{bmatrix} \begin{bmatrix} 1 &  & b \varpi^{-1} & z\varpi^{-4}\\ &1&&b\varpi^{-1} \\ &&1&  \\ &&&1 \end{bmatrix}  ) v\dif a\dif b\dif z\\
&+\eta\tau\int\limits_{\OF^\times-(1+\p)}\int\limits_{\OF^\times}\int\limits_{\OF^\times} \chi(abz(1-z))
\pi(\setlength{\arraycolsep}{3pt}\begin{bmatrix}
1&b\varpi^{-2}&\\
&1&&\\
 &&1&-b\varpi^{-2}\\
& &&1
\end{bmatrix}
\begin{bmatrix}
1 &  & a\varpi^{-1}&-ab(1-z)\varpi^{-3} \\
&1& & a\varpi^{-1}\\
&&1 & \\
&&&1
\end{bmatrix}
) v\dif a\dif b\dif z\\
&+q^{-2}\chi(-1)\eta^2\tau\int\limits_{\OF^\times}\int\limits_{\OF^\times}\int\limits_{\OF^\times} \chi(b)
\pi(
\begin{bmatrix}
1&a\varpi^{-1}& &\\
&1&&\\
 &&1&-a\varpi^{-1}\\
& &&1
\end{bmatrix}
\begin{bmatrix}
1&&&b\varpi^{-1} \\
 & 1& x\varpi^{-1}& \\
&&1& \\
&& & 1 
\end{bmatrix})
v\dif a\dif b\dif x\\
&+q^{-3}\chi(-1)\eta^2\tau^2\int\limits_{\OF^\times}\int\limits_{\OF^\times} \chi(b)
\pi(\begin{bmatrix}
1&a\varpi^{-2}& &b\varpi^{-1}\\
&1&&\\
 &&1&-a\varpi^{-2}\\
& &&1
\end{bmatrix} )
v\dif a\dif b\\
&+q^3\tau^{-1}\!\!
\int\limits_{\OF}\! \int\limits_{\OF^\times}\!\int\limits_{\OF^\times}\!\int\limits_{\OF^\times}\!\chi(ab)   \pi(
\setlength{\arraycolsep}{1pt}\begin{bmatrix} 1 & b\varpi^{-1}  & &  \\ &1&&\\ &&1&-b\varpi^{-1} \\ &&&1 \end{bmatrix} \!\!\!
\setlength{\arraycolsep}{1pt}\begin{bmatrix} 1 &  & -(b-xa\varpi)x\varpi^{-3} & z\varpi^{-4} \\ &1&x\varpi^{-2}&-(b-xa\varpi)x\varpi^{-3}\\ &&1& \\ &&&1 \end{bmatrix}  ) v\dif a\dif b\dif x\dif z\\
&+q^2\tau^{-1} \int\limits_{\OF} \int\limits_{\OF}\int\limits_{\OF^\times}\int\limits_{\OF^\times} \chi(ab) \pi(\setlength{\arraycolsep}{2pt}\begin{bmatrix} 1 &   b \varpi^{-1} && \\  &1&& \\ &&1&-b\varpi^{-1} \\  &&&1 \end{bmatrix} 
\setlength{\arraycolsep}{2pt}\begin{bmatrix} 1 &   &(a+by)\varpi^{-2} & z\varpi^{-4} \\  &1&-y\varpi^{-1}&(a+by)\varpi^{-2} \\ &&1&\\  &&&1 \end{bmatrix}  ) v \dif a\dif b\dif y\dif z\\
&+q\eta\tau^{-1}\int\limits_{\OF^\times-(1+\p)}\!\int\limits_{\OF}\!\int\limits_{\OF^\times} \int\limits_{\OF^\times}\chi(ab)
\pi(
\begin{bmatrix}
1&-a\varpi^{-1}&&\\
&1&&\\
&&1&a\varpi^{-1}\\
&&&1
\end{bmatrix}\\&
\begin{bmatrix}
1&&y\varpi^{-2}&a(y+bz)\varpi^{-3}\\
&1&a^{-1}(y+bz(z-1)^{-1})\varpi^{-1}&y\varpi^{-2} \\
&&1&\\
&&&1
\end{bmatrix}
) v\dif a\dif b\dif y\dif z\\
&+q^2\chi(-1)\eta\tau^{-1}\!\int\limits_{\OF^\times-(1+\p)}\int\limits_{\OF^\times}\int\limits_{\OF^\times} \int\limits_{\OF^\times}\chi(ba(1-z)z)
\pi(\begin{bmatrix}
1&b\varpi^{-1} &&\\
&1&&\\
&&1&-b\varpi^{-1} \\
&&&1
\end{bmatrix}\\
&\begin{bmatrix}
1&&-x\varpi^{-3}& b(x-za\varpi)\varpi^{-4} \\
&1&b^{-1}(x+a\varpi)\varpi^{-2}&-x\varpi^{-3}\\
&&1&\\
&&&1
\end{bmatrix})v\dif x\dif a\dif b\dif z\\
&+\chi(-1)\eta^2\tau^{-1}\int\limits_{\OF^\times}\int\limits_{\OF^\times}\int\limits_{\OF^\times} \chi(bx)
\pi(\begin{bmatrix}
1&&a\varpi^{-2}&b(x-1)\varpi^{-1}\\
&1&-a^2b^{-1}\varpi^{-3}&a\varpi^{-2}\\
&&1&\\
 & & &1 
\end{bmatrix} )
v\dif a\dif b\dif x\\
&+q\chi(-1)\eta^2\tau^{-2}\int\limits_{\OF^\times}\int\limits_{\OF^\times}\int\limits_{\OF^\times} \chi(b)
\pi(
\begin{bmatrix}
1&&a\varpi^{-2}&b\varpi^{-1}\\
&1&x \varpi^{-4}&a\varpi^{-2}\\
&&1&\\
 & & &1 
\end{bmatrix} )
v\dif a\dif b\dif x\\
&+\eta^2\tau^{-2}\int\limits_{\OF}\!\int\limits_{\OF^\times} \int\limits_{\OF^\times}\chi(a)
\pi(\begin{bmatrix}
1&&b\varpi^{-2}&-a\varpi^{-1}\\
&1&y\varpi^{-3}&b\varpi^{-2} \\
&&1&\\
&&&1
\end{bmatrix}
) v\dif a\dif b\dif y.
\end{align*}
For the remainder of the proof, we will simplify by combining pairs of terms and rewriting certain domains.  First we combine the terms involving $\eta^2\tau^{-2}$:
\begin{align*}
&q\chi(-1)\eta^2\tau^{-2}\int\limits_{\OF^\times}\int\limits_{\OF^\times}\int\limits_{\OF^\times} \chi(b)
\pi(
\begin{bmatrix}
1&&a\varpi^{-2}&b\varpi^{-1}\\
&1&x \varpi^{-4}&a\varpi^{-2}\\
&&1&\\
 & & &1 
\end{bmatrix} )
v\dif a\dif b\dif x\\
&+\eta^2\tau^{-2}\int\limits_{\OF}\!\int\limits_{\OF^\times} \int\limits_{\OF^\times}\chi(a)
\pi(\begin{bmatrix}
1&&b\varpi^{-2}&-a\varpi^{-1}\\
&1&y\varpi^{-3}&b\varpi^{-2} \\
&&1&\\
&&&1
\end{bmatrix}
) v\dif a\dif b\dif y\\
&=q\chi(-1)\eta^2\tau^{-2}\int\limits_{\OF}\int\limits_{\OF^\times}\int\limits_{\OF^\times} \chi(b)
\pi(
\begin{bmatrix}
1&&a\varpi^{-2}&b\varpi^{-1}\\
&1&x \varpi^{-4}&a\varpi^{-2}\\
&&1&\\
 & & &1 
\end{bmatrix} )
v\dif a\dif b\dif x.
\end{align*}
Next, we combine the terms involving $\eta\tau^{-1}$.
\begin{align*}
&q\eta\tau^{-1}\int\limits_{\OF^\times-(1+\p)}\!\int\limits_{\OF}\!\int\limits_{\OF^\times} \int\limits_{\OF^\times}\chi(ab)
\pi(
\begin{bmatrix}
1&-a\varpi^{-1}&&\\
&1&&\\
&&1&a\varpi^{-1}\\
&&&1
\end{bmatrix}\\&
\begin{bmatrix}
1&&y\varpi^{-2}&a(y+bz)\varpi^{-3}\\
&1&a^{-1}(y+bz(z-1)^{-1})\varpi^{-1}&y\varpi^{-2} \\
&&1&\\
&&&1
\end{bmatrix}
) v\dif a\dif b\dif y\dif z\\
&+q^2\chi(-1)\eta\tau^{-1}\!\int\limits_{\OF^\times-(1+\p)}\int\limits_{\OF^\times}\int\limits_{\OF^\times} \int\limits_{\OF^\times}\chi(ba(1-z)z)
\pi(\begin{bmatrix}
1&b\varpi^{-1} &&\\
&1&&\\
&&1&-b\varpi^{-1} \\
&&&1
\end{bmatrix}\\
&\begin{bmatrix}
1&&-x\varpi^{-3}& b(x-za\varpi)\varpi^{-4} \\
&1&b^{-1}(x+a\varpi)\varpi^{-2}&-x\varpi^{-3}\\
&&1&\\
&&&1
\end{bmatrix})v\dif x\dif a\dif b\dif z\\
&=q^2\chi(-1)\eta\tau^{-1}\int\limits_{\OF^\times-(1+\p)}\!\int\limits_{\OF}\!\int\limits_{\OF^\times} \int\limits_{\OF^\times}\chi(abz(1-z))
\pi(
\begin{bmatrix}
1&a\varpi^{-1}&&\\
&1&&\\
&&1&-a\varpi^{-1}\\
&&&1
\end{bmatrix}\\&
\begin{bmatrix}
1&&y\varpi^{-3}&-a(y+b(1-z)\varpi)\varpi^{-4}\\
&1&a^{-1}(-y+b\varpi)\varpi^{-2}&y\varpi^{-3} \\
&&1&\\
&&&1
\end{bmatrix}
) v\dif a\dif b\dif y\dif z.
\end{align*}
Now we combine the two terms involving $\tau^{-1}$.
\begin{align*}
&q^3\tau^{-1}\!\!\!
\int\limits_{\OF} \int\limits_{\OF^\times}\int\limits_{\OF^\times}\int\limits_{\OF^\times}\chi(ab)   \pi(
\setlength{\arraycolsep}{1pt}
\begin{bmatrix} 1 & b\varpi^{-1}  & &  \\ &1&&\\ &&1&-b\varpi^{-1} \\ &&&1 \end{bmatrix} \!\!\!
\setlength{\arraycolsep}{1pt}\begin{bmatrix} 1 &  & -(b-xa\varpi)x\varpi^{-3} & z\varpi^{-4} \\ &1&x\varpi^{-2}&-(b-xa\varpi)x\varpi^{-3}\\ &&1& \\ &&&1 \end{bmatrix}  ) v \dif a \dif b \dif x \dif z\\
&+q^2\tau^{-1} \int\limits_{\OF} \int\limits_{\OF}\int\limits_{\OF^\times}\int\limits_{\OF^\times} \chi(ab) \pi(\setlength{\arraycolsep}{1pt}\begin{bmatrix} 1 &   b \varpi^{-1} && \\  &1&& \\ &&1&-b\varpi^{-1} \\  &&&1 \end{bmatrix} 
\setlength{\arraycolsep}{1pt}\begin{bmatrix} 1 &   &(a+by)\varpi^{-2} & z\varpi^{-4} \\  &1&-y\varpi^{-1}&(a+by)\varpi^{-2} \\ &&1&\\  &&&1 \end{bmatrix}  ) v \dif a\dif b\dif y\dif z\\
&=q^3\tau^{-1}
\int\limits_{\OF} \int\limits_{\OF}\int\limits_{\OF^\times}\int\limits_{\OF^\times}\chi(ab)   \pi(
\setlength{\arraycolsep}{1pt}
\begin{bmatrix} 1 & b\varpi^{-1}  & &  \\ &1&&\\ &&1&-b\varpi^{-1} \\ &&&1 \end{bmatrix} 
\setlength{\arraycolsep}{1pt}
\begin{bmatrix} 1 &  & (xb+a\varpi)\varpi^{-3} & z\varpi^{-4} \\ &1&-x\varpi^{-2}&(xb+a\varpi)\varpi^{-3}\\ &&1& \\ &&&1 \end{bmatrix}  ) v\dif a\dif b\dif x\dif z.
\end{align*}
We now consider terms that involve the $\eta$ operator.  The sum of the third and fourth of these $\eta$ terms can be rewritten as follows:
\begin{align*}
&q\eta\int\limits_{\OF}\int\limits_{\OF^\times}\int\limits_{\OF^\times} \chi(b)
\pi(\begin{bmatrix} 1& x\varpi^{-2}& & \\ &1&&\\ &&1&-x\varpi^{-2}\\ & &&1 \end{bmatrix}\begin{bmatrix} 1& & a\varpi^{-2}&(b\varpi-ax)\varpi^{-4} \\ &1&&a\varpi^{-2} \\ &&1&\\& &&1 \end{bmatrix}  )
v\dif a\dif b\dif x\\
&+\eta\int\limits_{\OF}\!\int\limits_{\OF^\times} \int\limits_{\OF^\times}\chi(ab)
\pi(\begin{bmatrix}
1&-a\varpi^{-2}&&\\
&1&&\\
&&1&a\varpi^{-2}\\
&&&1
\end{bmatrix}\begin{bmatrix}
1&&y\varpi^{-1}&a(y+b)\varpi^{-3}\\
&1&&y\varpi^{-1} \\
&&1&\\
&&&1
\end{bmatrix}
) v\dif a\dif b\dif y\\
&=q\eta\int\limits_{\OF^\times}\int\limits_{\OF^\times}\int\limits_{\OF} \chi(b)
\pi(\begin{bmatrix} 1& x\varpi^{-2}& & \\ &1&&\\ &&1&-x\varpi^{-2}\\ & &&1 \end{bmatrix}\begin{bmatrix} 1& & a\varpi^{-2}&(b\varpi-ax)\varpi^{-4} \\ &1&&a\varpi^{-2} \\ &&1&\\& &&1 \end{bmatrix}  )
v\dif a\dif b\dif x\\
&+\eta\int\limits_{\OF}\int\limits_{\OF^\times}\int\limits_{\OF^\times} \chi(b)
\pi(\begin{bmatrix} 1& x\varpi^{-1}& & \\ &1&&\\ &&1&-x\varpi^{-1}\\ & &&1 \end{bmatrix}\begin{bmatrix} 1& & a\varpi^{-2}&(b-ax)\varpi^{-3} \\ &1&&a\varpi^{-2} \\ &&1&\\& &&1 \end{bmatrix}  )
v\dif a\dif b\dif x.\\
\end{align*}
We rewrite the first $\eta$ term after making the observation that if $z\in\OF^\times-(1+\p)$ and $f$ is a locally constant function on $\OF^\times$, then  
$$
\int\limits_{\OF^\times-A(z)}f(x) \dif x=\int\limits_{\OF^\times-(1+\p)}f((z^{-1}-1)(w^{-1}-1)^{-1})\dif w, 
$$
where $A(z)=-z^{-1}(1-z)+\p$.  Hence
\begin{align*}
&q\chi(-1)\eta\!\int\limits_{\OF^\times-(1+\p)}\int\limits_{\OF^\times}\int\limits_{\OF^\times} \int\limits_{\OF^\times - A(z)}\chi(abx)
\pi(\begin{bmatrix}
1&b \varpi^{-1} &&\\
&1&&\\
&&1&-b\varpi^{-1} \\
&&&1
\end{bmatrix}\\
&\begin{bmatrix}
1&&a\varpi^{-2} & -abx^{-1}(1+x-z)\varpi^{-3} \\
&1& -ab^{-1}xz (1-z +zx)^{-1}\varpi^{-1} & a\varpi^{-2} \\
&&1&\\
&&&1
\end{bmatrix}
) v\dif x\dif a\dif b\dif z\\
&=q\chi(-1)\eta\!\int\limits_{\OF^\times-(1+\p)}\int\limits_{\OF^\times}\int\limits_{\OF^\times} \int\limits_{\OF^\times - (1+\p)}\chi(abz(1-z)w(1-w))
\pi(\begin{bmatrix}
1&b \varpi^{-1} &&\\
&1&&\\
&&1&-b\varpi^{-1} \\
&&&1
\end{bmatrix}\\
&\begin{bmatrix}
1&&a\varpi^{-2} & -ab(1+zw^{-1}(1-w))\varpi^{-3} \\
&1& -ab^{-1}w\varpi^{-1} & a\varpi^{-2} \\
&&1&\\
&&&1
\end{bmatrix}
) v\dif w\dif a\dif b\dif z\\
&=q\chi(-1)\eta\!\int\limits_{\OF^\times-(1+\p)}\int\limits_{\OF^\times}\int\limits_{\OF^\times} \int\limits_{\OF^\times - (1+\p)}\chi(az^{-1}(1-z)bw^{-1}(1-w))
\pi(\begin{bmatrix}
1&b \varpi^{-1} &&\\
&1&&\\
&&1&-b\varpi^{-1} \\
&&&1
\end{bmatrix}\\
&\begin{bmatrix}
1&&a\varpi^{-2} & -ab(1+zw^{-1}(1-w))\varpi^{-3} \\
&1& -ab^{-1}w\varpi^{-1} & a\varpi^{-2} \\
&&1&\\
&&&1
\end{bmatrix}
) v\dif w\dif a\dif b\dif z\\
&=q\chi(-1)\eta\!\int\limits_{\OF^\times-(1+\p)}\int\limits_{\OF^\times}\int\limits_{\OF^\times} \int\limits_{\OF^\times - (1+\p)}\chi(a(z^{-1}-1)b(w^{-1}-1))
\pi(\begin{bmatrix}
1&b \varpi^{-1} &&\\
&1&&\\
&&1&-b\varpi^{-1} \\
&&&1
\end{bmatrix}\\
&\begin{bmatrix}
1&&a\varpi^{-2} & -ab(1+z(w^{-1}-1))\varpi^{-3} \\
&1& -ab^{-1}w\varpi^{-1} & a\varpi^{-2} \\
&&1&\\
&&&1
\end{bmatrix}
) v\dif w\dif a\dif b\dif z\\
&=q\chi(-1)\eta\!\int\limits_{\OF^\times-(1+\p)}\int\limits_{\OF^\times}\int\limits_{\OF^\times} \int\limits_{\OF^\times - (1+\p)}\chi(a(z-1)b(w-1))
\pi(\begin{bmatrix}
1&b \varpi^{-1} &&\\
&1&&\\
&&1&-b\varpi^{-1} \\
&&&1
\end{bmatrix}\\
&\begin{bmatrix}
1&&a\varpi^{-2} & -ab(1+z^{-1}(w-1))\varpi^{-3} \\
&1& -ab^{-1}w^{-1}\varpi^{-1} & a\varpi^{-2} \\
&&1&\\
&&&1
\end{bmatrix}
) v\dif w\dif a\dif b\dif z\\
&=q\chi(-1)\eta\!\int\limits_{\OF^\times-(-1+\p)}\int\limits_{\OF^\times}\int\limits_{\OF^\times} \int\limits_{\OF^\times - (-1+\p)}\chi(aybx)
\pi(\begin{bmatrix}
1&b \varpi^{-1} &&\\
&1&&\\
&&1&-b\varpi^{-1} \\
&&&1
\end{bmatrix}\\
&\begin{bmatrix}
1&&a\varpi^{-2} & -ab(1+(1+y)^{-1}x)\varpi^{-3} \\
&1& -ab^{-1}(1+x)^{-1}\varpi^{-1} & a\varpi^{-2} \\
&&1&\\
&&&1
\end{bmatrix}
) v\dif x\dif a\dif b\dif y\\
&=q\eta\!\int\limits_{\OF^\times-(1+\p)}\int\limits_{\OF^\times}\int\limits_{\OF^\times} \int\limits_{\OF^\times - (1+\p)}\chi(abxy)
\pi(\begin{bmatrix}
1&b \varpi^{-1} &&\\
&1&&\\
&&1&-b\varpi^{-1} \\
&&&1
\end{bmatrix}\\
&\begin{bmatrix}
1&&-a\varpi^{-2} & ab(1-(1-y)^{-1}x)\varpi^{-3} \\
&1& ab^{-1}(1-x)^{-1}\varpi^{-1} & -a\varpi^{-2} \\
&&1&\\
&&&1
\end{bmatrix}
) v\dif x\dif a\dif b\dif y.
\end{align*}
Finally, we are able to eliminate the factor $\chi(-1)$ from all terms using an appropriate change of variables.  Substituting the simplified terms into the formula for $T_\chi(v)$, we obtain the result.
\end{proof}
The remainder of this section is devoted to calculating the formulas for \eqref{termoneeq}, \eqref{termtwoeq}, \eqref{termthreeeq}, and \eqref{termfoureq}.  Except for \eqref{termthreeeq}, the terms are each calculated in a single lemma.  The lemmas for calculating \eqref{termthreeeq} appear after those of the more straightforward terms, and there is some explanation of the strategy used to complete this calculation.
\begin{lemma}\label{termonelemma}
If $v\in V(0)$, then we have that \eqref{termoneeq} is given by
\begin{align*}
&q\tau\int\limits_{\OF} \int\limits_{\OF^\times}\int\limits_{\OF^\times}\chi(ab)  
\pi( \begin{bmatrix} 1 & -a\varpi^{-2} & & \\ &1&& \\ &&1&a\varpi^{-2} \\ &&&1 \end{bmatrix} \begin{bmatrix} 1 &  & b \varpi^{-1} & z\varpi^{-4}\\ &1&&b\varpi^{-1} \\ &&1&  \\ &&&1 \end{bmatrix}  ) v\dif a\dif b\dif z\\
&+q^3\tau^{-1}\!\!\!
\int\limits_{\OF}\! \int\limits_{\OF^\times}\!\int\limits_{\OF^\times}\!\int\limits_{\OF^\times}\chi(ab)   \pi(
\setlength{\arraycolsep}{1pt}\begin{bmatrix} 1 & b\varpi^{-1}  & &  \\ &1&&\\ &&1&-b\varpi^{-1} \\ &&&1 \end{bmatrix} \!\!\!
\setlength{\arraycolsep}{1pt}\begin{bmatrix} 1 &  & -(b-xa\varpi)x\varpi^{-3} & z\varpi^{-4} \\ &1&x\varpi^{-2}&-(b-xa\varpi)x\varpi^{-3}\\ &&1& \\ &&&1 \end{bmatrix}  ) v\dif a\dif b\dif x\dif z\\
&+q^2
\int\limits_{\OF} \int\limits_{\OF^\times}\int\limits_{\OF^\times}\int\limits_{\OF^\times}\chi(ab)   \pi(
\setlength{\arraycolsep}{1pt}\begin{bmatrix} 1 & -(a+xb)\varpi^{-1}  & & \\ &1&& \\ &&1&(a+xb) \varpi^{-1} \\ &&&1 \end{bmatrix} 
\setlength{\arraycolsep}{1pt}\begin{bmatrix} 1 &   & b\varpi^{-2} & z \varpi^{-4} \\ &1&x^{-1}\varpi^{-1}&b\varpi^{-2} \\ &&1& \\ &&&1 \end{bmatrix} ) v\dif a\dif b\dif x\dif z.
\end{align*}
\end{lemma}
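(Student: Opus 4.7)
The goal is to rewrite \eqref{termoneeq} in the form asserted by partitioning the $x$-integration domain as $\OF = \OF^\times \sqcup (\p\setminus\p^2) \sqcup \p^2$ and separately applying an Iwasawa decomposition of the group element $M = L U \tau$ appearing under $\pi$ on each piece, where $L$ is the lower-triangular factor with $x$ at position $(3,2)$ and $U$ is the upper-unipotent factor from \eqref{termoneeq}. The matrix $M$ has only one strictly-lower-triangular entry, namely $x\varpi^{-1}$ at $(3,2)$, so an Iwasawa decomposition is obtained by right-multiplying $M$ by a suitable element of $\GSp(4,\OF)$ that clears this entry, producing a factorization $M = \widetilde{\tau}\,U'\,K$ with $\widetilde{\tau} \in \{\tau,1,\tau^{-1}\}$ diagonal, $U'$ upper-unipotent, and $K \in \GSp(4,\OF)$. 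Since $v$ is fixed by $\GSp(4,\OF)$, this gives $\pi(M)v = \pi(\widetilde{\tau})\pi(U')v$, producing one of the three terms.

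For $x \in \p^2$ the entry $-x\varpi^{-2}$ lies in $\OF$, so right-multiplication by $C = I - x\varpi^{-2}\,e_{3,2}$ (which lies in $\GSp(4,\OF)$; the symplectic condition ${}^t\!C J'C = J'$ is automatic for a single elementary matrix of the form $I + c\,e_{3,2}$) clears $(3,2)$ directly. The resulting upper-triangular matrix has diagonal exactly $\tau$, so $\widetilde{\tau} = \tau$. The substitutions $a \mapsto a + xb\varpi^{-1}$ and $z \mapsto z + ab\varpi$ are $\p$-shifts preserving $\chi(ab)$ by $\chi(1+\p)=1$, and reduce $U'$ to the form of the first term while rendering it $x$-independent. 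The $x$-integration over $\p^2$ then contributes the measure $q^{-2}$, giving the final coefficient $q = q^3 \cdot q^{-2}$.

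For $x \in \OF^\times$ and $x \in \p\setminus\p^2$, the entry $x\varpi^{-1}$ has valuation $\le 0$ and cannot be cleared by a single $\OF$-coefficient column operation. We first right-multiply by the Weyl element $s_2^{-1} \in \GSp(4,\OF)$ (swapping columns $2$ and $3$ with sign, embedding the identity \eqref{flipup} in the $(2,3)$-block), which converts $(3,2)$ to $\varpi$, and then by $C = I + \alpha\,e_{3,2}$ with $\alpha = \varpi^2/x \in \p^2$ for $x \in \OF^\times$ and $\alpha = \varpi/y \in \p$ for $x = \varpi y$, $y \in \OF^\times$. In the first case the resulting diagonal $(1,-\varpi/x,-x\varpi^{-1},1)$ factors as $\tau^{-1} \cdot \diag(1,-1/x,-x,1)$ with the second factor in $\GSp(4,\OF)$, giving $\widetilde{\tau} = \tau^{-1}$; in the second case the diagonal $(1,-y^{-1},-y,1)$ is itself in $\GSp(4,\OF)$, giving $\widetilde{\tau} = 1$. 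In each case $U'$ must further be conjugated by the absorbed $\GSp(4,\OF)$-diagonal, and then the substitutions $x \mapsto x^{-1}$, $a \mapsto -a/x$, $b \mapsto -(b + a x\varpi)/x$ (in the $\OF^\times$ case) and analogous substitutions in the $\p \setminus \p^2$ case---all with unit-valued Jacobian and all preserving $\chi(ab)$ via $\chi^2 = 1$ combined with $\chi(1+\p) = 1$---bring $U'$ to the form displayed. The $x = \varpi y$ substitution contributes $q^{-1}$ to the $\p \setminus \p^2$ piece, yielding coefficients $q^3$ (second term) and $q^2$ (third term).

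The main obstacle is not conceptual but computational: tracking the precise form of $U'$ emerging from each Iwasawa decomposition, computing its conjugation by any absorbed diagonals, and identifying the substitutions in $a, b, x, z$ that reduce it to the exact matrices displayed in the statement. At each step one must verify that $C \in \GSp(4,\OF)$ by direct check of the symplectic condition, that the substitution is a measure-preserving bijection of the relevant $p$-adic domain, and that the corrections to $\chi(ab)$ are controlled by the combination of $\chi(1+\p) = 1$ and $\chi^2 = 1$.
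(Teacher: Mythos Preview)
Your proposal is correct and follows essentially the same approach as the paper. The paper's own proof is deferred to the appendix \cite{JR3}, but Remark~\ref{proofmethodrmk} describes exactly your strategy: decompose the domain of integration, obtain an Iwasawa identity $g=bk$ on each piece (using \eqref{flipup} when the lower entry is not integral), and exploit $\chi^2=1$ and $\chi(1+\p)=1$ in the changes of variables. Your three-fold partition $\OF=\OF^\times\sqcup(\p\setminus\p^2)\sqcup\p^2$ of the $x$-domain is the correct one, and the resulting diagonal factors $\tau^{-1},\,1,\,\tau$ together with the measure contributions $1,\,q^{-1},\,q^{-2}$ account precisely for the three displayed terms with coefficients $q^3,\,q^2,\,q$.
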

The \hyperlink{termonelemmaproof}{proof} is given below in Section \ref{lemmaproofsec}.

\begin{lemma}\label{termtwolemma}
If $v\in V(0)$, then we have that \eqref{termtwoeq} is given by
\begin{align*}
&q^2\tau^{-1} \int\limits_{\OF} \int\limits_{\OF}\int\limits_{\OF^\times}\int\limits_{\OF^\times} \chi(ab) \pi(
\setlength{\arraycolsep}{1pt}\begin{bmatrix} 1 &   b \varpi^{-1} && \\  &1&& \\ &&1&-b\varpi^{-1} \\  &&&1 \end{bmatrix} 
\setlength{\arraycolsep}{1pt}\begin{bmatrix} 1 &   &(a+by)\varpi^{-2} & z\varpi^{-4} \\  &1&-y\varpi^{-1}&(a+by)\varpi^{-2} \\ &&1&\\  &&&1 \end{bmatrix}  ) v \dif a\dif b\dif y\dif z.
\end{align*}
\end{lemma}
The straightforward \hyperlink{termtwolemmaproof}{proof} is given below in Section \ref{lemmaproofsec}.

\begin{lemma}\label{termfourlemma}
If $v\in V(0)$, then we have that \eqref{termfoureq} is given by
\begin{align*}
&\int\limits_{\OF}\!\int\limits_{\OF^\times} \int\limits_{\OF^\times}\chi(ab)
\eta\pi(\begin{bmatrix}
1&-a\varpi^{-2}&&\\
&1&&\\
&&1&a\varpi^{-2}\\
&&&1
\end{bmatrix}\begin{bmatrix}
1&&y\varpi^{-1}&a(y+b)\varpi^{-3}\\
&1&&y\varpi^{-1} \\
&&1&\\
&&&1
\end{bmatrix}
) v\dif a\dif b\dif y\\
&+q\int\limits_{\OF^\times-(1+\p)}\!\int\limits_{\OF}\!\int\limits_{\OF^\times} \int\limits_{\OF^\times}\chi(ab)
\eta\tau^{-1}\pi(
\begin{bmatrix}
1&-a\varpi^{-1}&&\\
&1&&\\
&&1&a\varpi^{-1}\\
&&&1
\end{bmatrix}\\&
\begin{bmatrix}
1&&y\varpi^{-2}&a(y+bz)\varpi^{-3}\\
&1&a^{-1}(y+bz(z-1)^{-1})\varpi^{-1}&y\varpi^{-2} \\
&&1&\\
&&&1
\end{bmatrix}
) v\dif a\dif b\dif y\dif z\\
&+\!\int\limits_{\OF}\!\int\limits_{\OF^\times} \int\limits_{\OF^\times}\chi(a)
\eta^2\tau^{-2}\pi(\begin{bmatrix}
1&&b\varpi^{-2}&-a\varpi^{-1}\\
&1&y\varpi^{-3}&b\varpi^{-2} \\
&&1&\\
&&&1
\end{bmatrix}
) v\dif a\dif b\dif y.
\end{align*}
\end{lemma}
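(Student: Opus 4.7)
The plan is to put the matrix argument of $\pi$ in \eqref{termfoureq} into Iwasawa form $b\cdot k$ with $b$ upper-triangular and $k\in\GSp(4,\OF)$, and then exploit $\pi(k)v=v$ (since $v\in V(0)$) to eliminate $k$. This is the strategy spelled out in Remark \ref{proofmethodrmk} and already employed in Lemmas \ref{termonelemma}--\ref{termthreelemma}, so I would follow the same template here.

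The first concrete step is to multiply out the four matrix factors in the integrand. The product of $t_4$ with the central permutation matrix collapses to an anti-diagonal element with corner entries $\pm\varpi^{\pm 4}$; right-multiplying by the lower-triangular factor with $y\in\p$ and then by the Siegel upper-triangular factor produces a matrix whose anti-diagonal blocks obstruct a direct Iwasawa decomposition. To rewrite the obstructing $2\times 2$ lower-triangular block, I would apply the formal identity \eqref{flipup}; this inserts a diagonal factor together with a Weyl element which then combines with the ambient $\tau$ to produce a shifted prefactor of the form $\eta^a\tau^b$. The three different shifted prefactors $\eta$, $\eta\tau^{-1}$, and $\eta^2\tau^{-2}$ visible on the right-hand side tell us that the $y$-integration (coupled to the $z$-integration) must be split into three subregions according to valuation conditions governing when and how deeply \eqref{flipup} is required.

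A tentative breakdown is as follows: in the subcase where the relevant $2\times 2$ block is already upper-triangular modulo $\OF^\times$, no flip is needed and the computation yields the first (pure $\eta$) term, with the $z$-integration collapsing into the $\GSp(4,\OF)$-piece and disappearing from the output; in the intermediate subcase where the lower-left entry is a unit but only a single application of \eqref{flipup} is needed, one obtains the $\eta\tau^{-1}$ term, retaining a $z$-integration over $\OF^\times\setminus(1+\p)$ (the forbidden subset arising as the locus where the flipped $2\times 2$ block degenerates); and the residual ``deepest'' subcase, requiring the strongest shift, yields the $\eta^2\tau^{-2}$ term. In each subcase the remaining upper-triangular block must be expanded out, and variable substitutions (typically of the shape $a\mapsto a^{-1}$, $b\mapsto -b$, etc.) are made to bring the integrand into the exact target shape.

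The main obstacle will be the bookkeeping of valuations, measures, and signs across these substitutions. The quadratic hypothesis $\chi^2=1$ is essential for absorbing stray factors of $\chi(-1)^{\pm1}$, and the conductor hypothesis $\chi(1+\p)=1$ lets us freely modify character arguments within $1+\p$, collapsing the pieces into the three clean terms. The prefactors of $q$ in the output encode the Jacobians of the substitutions and the decomposition of the original measure on $\OF\times\p\times\OF^\times\times\OF^\times$, and matching them to the claimed values provides a useful internal check. I expect the hardest book-keeping step to be the intermediate $\eta\tau^{-1}$ subcase, where the constraint $y\in\p$ interacts nontrivially with the $z$-domain split and the flip identity must be applied with care to produce exactly the entries $(y+bz)$ and $a^{-1}(y+bz(z-1)^{-1})\varpi^{-1}$ appearing in the statement.
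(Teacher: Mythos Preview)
Your outline is faithful to the methodology the paper itself prescribes in Remark~\ref{proofmethodrmk}: reduce the integrand to upper-triangular form via Iwasawa decomposition, invoke the flip identity~\eqref{flipup} where lower-triangular entries obstruct, and split the domain of integration according to valuation conditions so that each subcase yields one of the three target terms with its own $\eta^a\tau^b$ prefactor. The paper's own proof of Lemma~\ref{termfourlemma} is not carried out in the main text but is deferred to the external appendix~\cite{JR3}, so there is no in-paper argument to compare against in detail; your proposal is precisely the template the paper announces for all four of Lemmas~\ref{termonelemma}--\ref{termfourlemma}.

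One small caution on your tentative case split: the variable that governs the three-way branching is the $z$-variable (ranging over $\OF$) rather than $y\in\p$. The splitting is by the valuation of $z$: the case $z\in\p$ collapses to the first $\eta$-term (losing one integration variable), the case $z\in\OF^\times$ with $z\not\equiv 1\pmod\p$ gives the middle $\eta\tau^{-1}$-term, and the residual piece produces the $\eta^2\tau^{-2}$-term. Your description had the roles of $y$ and $z$ partly interchanged, but the mechanism you describe---flip, absorb the Weyl element into $\eta^a\tau^b$, rename variables, use $\chi^2=1$ and $\chi(1+\p)=1$ to clean up---is exactly right.
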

The \hyperlink{termfourlemmaproof}{proof} is given below in Section \ref{lemmaproofsec}.

Finally, we consider the most difficult summand in the expression for $T_\chi(v)$.
\begin{lemma}\label{termthreelemma}
If $v\in V(0)$, then we have that \eqref{termthreeeq} is given by
\begin{align*}
&q^2\chi(-1)\!\int\limits_{\OF^\times-(1+\p)}\int\limits_{\OF^\times}\int\limits_{\OF^\times} \int\limits_{\OF^\times}\chi(abx(1-z))
\eta\tau^{-1}\pi(\begin{bmatrix}
1&b\varpi^{-1} &&\\
&1&&\\
&&1&-b\varpi^{-1} \\
&&&1
\end{bmatrix}\\
&
\begin{bmatrix}
1&&a \varpi^{-3}& -ab(1+x\varpi)\varpi^{-4} \\
&1&-ab^{-1}(1+xz^{-1}\varpi)^{-1}\varpi^{-2}&a \varpi^{-3} \\
&&1&\\
&&&1
\end{bmatrix}
)v\dif x\dif a\dif b\dif z\\
&+\!\int\limits_{\OF^\times-(1+\p)}\int\limits_{\OF^\times}\int\limits_{\OF^\times} \int\limits_{\OF}\chi(abz(1-z))
\eta\tau\pi(
\setlength{\arraycolsep}{1pt}\begin{bmatrix}
1&b\varpi^{-2}&\\
&1&&\\
 &&1&-b\varpi^{-2}\\
& &&1
\end{bmatrix}
\setlength{\arraycolsep}{1pt}\begin{bmatrix}
1 &  & a\varpi^{-1}&-ab(1-z)\varpi^{-3} \\
&1& & a\varpi^{-1}\\
&&1 & \\
&&&1
\end{bmatrix}
) v\dif x\dif a\dif b\dif z\\
&+q\chi(-1)\!\int\limits_{\OF^\times-(1+\p)}\int\limits_{\OF^\times}\int\limits_{\OF^\times} \int\limits_{\OF^\times - A(z)}\chi(abx)
\eta\pi(\begin{bmatrix}
1&b \varpi^{-1} &&\\
&1&&\\
&&1&-b\varpi^{-1} \\
&&&1
\end{bmatrix}\\
&\begin{bmatrix}
1&&a\varpi^{-2} & abx^{-1}(1+x-z)\varpi^{-3} \\
&1& -ab^{-1}xz (1-z +zx)^{-1}\varpi^{-1} & a\varpi^{-2} \\
&&1&\\
&&&1
\end{bmatrix}
) v\dif x\dif a\dif b\dif z\\
&+\chi(-1)\!\int\limits_{\OF^\times-(1+\p)}\int\limits_{\OF^\times}\int\limits_{\OF^\times} \int\limits_{\OF} \chi(b(1-z))
\eta\pi(\begin{bmatrix}
1&&a\varpi^{-2}&-b\varpi^{-3}\\
&1&-a^2b^{-1}z\varpi^{-1}&a\varpi^{-2}\\
&&1&\\
&&&1
\end{bmatrix})v\dif x\dif a\dif b\dif z\\
&+q\chi(-1)\int\limits_{\OF^\times}\int\limits_{\OF^\times}\int\limits_{\OF^\times} \chi(b)
\eta^2\tau^{-2}\pi(
\begin{bmatrix}
1&&a\varpi^{-2}&b\varpi^{-1}\\
&1&x \varpi^{-4}&a\varpi^{-2}\\
&&1&\\
 & & &1 
\end{bmatrix} )
v\dif a\dif b\dif x\\
&+q^{-1}\chi(-1)\int\limits_{\OF^\times}\int\limits_{\OF^\times}\int\limits_{\OF^\times} \chi(bx)
\eta^2\pi(
\begin{bmatrix}
1&&a\varpi^{-2}&  b(1+x\varpi)\varpi^{-2}\\
&1&a^2b^{-1}\varpi^{-2}&a\varpi^{-2}\\
&&1&\\
&&&1
\end{bmatrix} )
v\dif a\dif b\dif x\\
&+q^{-2}\chi(-1)\int\limits_{\OF^\times}\int\limits_{\OF^\times}\int\limits_{\OF^\times} \chi(b)
\eta^2\tau\pi(
\begin{bmatrix}
1&a\varpi^{-1}& &\\
&1&&\\
 &&1&-a\varpi^{-1}\\
& &&1
\end{bmatrix}
\begin{bmatrix}
1&&&b\varpi^{-1} \\
 & 1& x\varpi^{-1}& \\
&&1& \\
&& & 1 
\end{bmatrix})
v\dif a\dif b\dif x\\
&+q^{-3}\chi(-1)\int\limits_{\OF^\times}\int\limits_{\OF^\times} \chi(b)
\eta^2\tau^2\pi(\begin{bmatrix}
1&a\varpi^{-2}& &b\varpi^{-1}\\
&1&&\\
 &&1&-a\varpi^{-2}\\
& &&1
\end{bmatrix} )
v\dif a\dif b\\
&+q\int\limits_{\OF}\int\limits_{\OF^\times}\int\limits_{\OF^\times} \chi(b)
\eta\pi(\begin{bmatrix} 1& x\varpi^{-2}& & \\ &1&&\\ &&1&-x\varpi^{-2}\\ & &&1 \end{bmatrix}\begin{bmatrix} 1& & a\varpi^{-2}&(b\varpi-ax)\varpi^{-4} \\ &1&&a\varpi^{-2} \\ &&1&\\& &&1 \end{bmatrix}  )
v\dif a\dif b\dif x.
\end{align*}
\end{lemma}
\begin{proof} 
The proof follows by combining the formulas in Lemmas \ref{termthreepreplemma}, \ref{11lemma}, \ref{12lemma}, \ref{13lemma}, \ref{100eqlemma}, and \ref{101eqlemma}, which are given below.
\end{proof}
The calculation of the \eqref{termthreeeq} term is the most delicate.  We begin this calculation with a preparatory lemma  which breaks the term into four pieces.  The majority of our calculations will be devoted to handling the first of these terms.
\begin{lemma}\label{termthreepreplemma}
If $v\in V(0)$, then we have that \eqref{termthreeeq} is given by
\begin{align}
&q^2\!\int\limits_{\OF^\times}\int\limits_{\OF}\int\limits_{\OF^\times}\int\limits_{\OF^\times} \chi(b)
\eta\pi(\begin{bmatrix} 1& & & \\ &1&& \\ &&1& \\z\varpi^{3} &&&1 \end{bmatrix}\begin{bmatrix} 1& x\varpi^{-2}& a\varpi^{-2}&b\varpi^{-3} \\ &1&&a\varpi^{-2} \\ &&1&-x\varpi^{-2}\\ & &&1 \end{bmatrix} )
v\dif a\dif b\dif x\dif z\label{10eq}\\
&+q\int\limits_{\OF^\times}\int\limits_{\OF}\int\limits_{\OF^\times}\int\limits_{\OF^\times} \chi(b)
\eta\pi(\begin{bmatrix} 1& & & \\ &1&& \\ &&1& \\z\varpi^{4} &&&1 \end{bmatrix}\begin{bmatrix} 1& x\varpi^{-2}& a\varpi^{-2}&b\varpi^{-3} \\ &1&&a\varpi^{-2} \\ &&1&-x\varpi^{-2}\\ & &&1 \end{bmatrix} )
v\dif a\dif b\dif x\dif z\label{11eq}\\
&+\int\limits_{\OF^\times}\int\limits_{\OF}\int\limits_{\OF^\times}\int\limits_{\OF^\times} \chi(b)
\eta\pi(\begin{bmatrix} 1& & & \\ &1&& \\ &&1& \\z\varpi^{5} &&&1 \end{bmatrix}\begin{bmatrix} 1& x\varpi^{-2}& a\varpi^{-2}&b\varpi^{-3} \\ &1&&a\varpi^{-2} \\ &&1&-x\varpi^{-2}\\ & &&1 \end{bmatrix} )
v\dif a\dif b\dif x\dif z\label{12eq}\\
&+q^{-1}\!\int\limits_{\OF}\int\limits_{\OF}\int\limits_{\OF^\times}\int\limits_{\OF^\times} \chi(b)
\eta\pi(\begin{bmatrix} 1& & & \\ &1&& \\ &&1& \\z\varpi^{6} &&&1 \end{bmatrix}\begin{bmatrix} 1& x\varpi^{-2}& a\varpi^{-2}&b\varpi^{-3} \\ &1&&a\varpi^{-2} \\ &&1&-x\varpi^{-2}\\ & &&1 \end{bmatrix} )
v\dif a\dif b\dif x\dif z\label{13eq}.
\end{align}
\end{lemma}
The \hyperlink{termthreeepreplemmaproof}{proof} of this lemma is given below in Section \ref{lemmaproofsec}.
The following three lemmas calculate the straightforward terms from the decomposition in Lemma \ref{termthreepreplemma}.  The strategy in each case is to conjugate by the integral lower triangular matrix, use the invariance of $v$ under $\GSp(4,\OF)$, and apply appropriate changes of variables.
\begin{lemma}\label{11lemma}
If $v\in V(0)$, then we have that \eqref{11eq} is given by
\begin{align*}
&(q-1)\int\limits_{\OF}\int\limits_{\OF^\times}\int\limits_{\OF^\times} \chi(b)
\eta\pi(\begin{bmatrix} 1& x\varpi^{-2}& a\varpi^{-2}&b\varpi^{-3} \\ &1&&a\varpi^{-2} \\ &&1&-x\varpi^{-2}\\ & &&1 \end{bmatrix})v\dif a\dif b\dif x.
\end{align*}
\end{lemma}
The \hyperlink{11lemmaproof}{proof} of this lemma is given below in Section \ref{lemmaproofsec}.
\begin{lemma}
\label{12lemma}
If $v \in V(0)$, then we have that \eqref{12eq} is given by
\begin{align*}
&(1-q^{-1})\int\limits_{\OF}\int\limits_{\OF^\times}\int\limits_{\OF^\times} \chi(b)
\eta\pi(\begin{bmatrix} 1& x\varpi^{-2}& a\varpi^{-2}&b\varpi^{-3} \\ &1&&a\varpi^{-2} \\ &&1&-x\varpi^{-2}\\ & &&1 \end{bmatrix} )
v\dif a\dif b\dif x.
\end{align*}
\end{lemma}
The \hyperlink{12lemmaproof}{proof} of this lemma is given below in Section \ref{lemmaproofsec}.
\begin{lemma}
If $v\in V(0)$, then we have that \eqref{13eq} is given by
\begin{align*}
&q^{-1}\int\limits_{\OF}\int\limits_{\OF^\times}\int\limits_{\OF^\times} \chi(b)
\eta\pi(\begin{bmatrix} 1& x\varpi^{-2}& a\varpi^{-2}&b\varpi^{-3} \\ &1&&a\varpi^{-2} \\ &&1&-x\varpi^{-2}\\ & &&1 \end{bmatrix})
v\dif a\dif b\dif x.
\end{align*}
\label{13lemma}
\end{lemma}
The \hyperlink{13lemmaproof}{proof} of this lemma is given below in Section \ref{lemmaproofsec}.
Finally, to calculate the remaining term \eqref{10eq} of \eqref{termthreeeq} in Lemma \ref{termthreepreplemma}, let 
$$
D(a,b)=\begin{bmatrix}1&&&\\&ab^{-1}&&\\&&a^{-1}&\\&&&b^{-1}\end{bmatrix}, 
$$
for $a,b\in F^\times$.  Then, we decompose \eqref{10eq} into two further terms as follows:
\begin{align}
&\nonumber q^2\!\int\limits_{\OF^\times}\int\limits_{\OF}\int\limits_{\OF^\times}\int\limits_{\OF^\times} \chi(b)
\eta\pi(\begin{bmatrix} 1& & & \\ &1&& \\ &&1& \\z\varpi^{3} &&&1 \end{bmatrix}\begin{bmatrix} 1& x\varpi^{-2}& a\varpi^{-2}&b\varpi^{-3} \\ &1&&a\varpi^{-2} \\ &&1&-x\varpi^{-2}\\ & &&1 \end{bmatrix} )
v\dif a\dif b\dif x\dif z\\
&=\nonumber q^2\!\int\limits_{\OF^\times}\int\limits_{\OF}\int\limits_{\OF^\times}\int\limits_{\OF^\times} \chi(-b)
\eta\pi(\begin{bmatrix} 1& & & \\ &1&& \\ &&1& \\z\varpi^{3} &&&1 \end{bmatrix}\begin{bmatrix} 1& x\varpi^{-2}& a\varpi^{-2}&-b\varpi^{-3} \\ &1&&a\varpi^{-2} \\ &&1&-x\varpi^{-2}\\ & &&1 \end{bmatrix} )
v\dif a\dif b\dif x\dif z\\
&=\nonumber q^2\chi(-1)\!\int\limits_{\OF^\times}\int\limits_{\OF}\int\limits_{\OF^\times}\int\limits_{\OF^\times} \chi(b)
\eta\pi(D(a,b)\setlength{\arraycolsep}{1pt}\begin{bmatrix} 1& & & \\ &1&& \\ &&1& \\bz\varpi^{3} &&&1 \end{bmatrix}
\setlength{\arraycolsep}{1pt}\begin{bmatrix} 1& ab^{-1}x\varpi^{-2}& \varpi^{-2}&-\varpi^{-3} \\ &1&&\varpi^{-2} \\ &&1&-ab^{-1}x\varpi^{-2}\\ & &&1 \end{bmatrix} )
v\dif a\dif b\dif x\dif z\\
&=\nonumber \chi(-1)q^2\!\int\limits_{\OF^\times}\int\limits_{\OF}\int\limits_{\OF^\times}\int\limits_{\OF^\times} \chi(b)
\eta\pi(D(a,b)\begin{bmatrix} 1& & & \\ &1&& \\ &&1& \\z\varpi^{3} &&&1 \end{bmatrix}
\begin{bmatrix} 1& x\varpi^{-2}& \varpi^{-2}&-\varpi^{-3} \\ &1&&\varpi^{-2} \\ &&1&-x\varpi^{-2}\\ & &&1 \end{bmatrix} )
v\dif a\dif b\dif x\dif z\\
&=\label{100eq}q^2\chi(-1)\!\int\limits_{\OF^\times-(1+\p)}\int\limits_{\OF}\int\limits_{\OF^\times}\int\limits_{\OF^\times} \chi(b)
\eta\pi(D(a,b)\setlength{\arraycolsep}{1pt}\begin{bmatrix} 1& & & \\ &1&& \\ &&1& \\z\varpi^{3} &&&1 \end{bmatrix}
\setlength{\arraycolsep}{1pt}\begin{bmatrix} 1& x\varpi^{-2}& \varpi^{-2}&-\varpi^{-3} \\ &1&&\varpi^{-2} \\ &&1&-x\varpi^{-2}\\ & &&1 \end{bmatrix} )
v\dif a\dif b\dif x\dif z\\
&+\label{101eq}q^2\chi(-1)\!\int\limits_{1+\p}\int\limits_{\OF}\int\limits_{\OF^\times}\int\limits_{\OF^\times} \chi(b)
\eta\pi(D(a,b)\setlength{\arraycolsep}{1pt}\begin{bmatrix} 1& & & \\ &1&& \\ &&1& \\z\varpi^{3} &&&1 \end{bmatrix}
\setlength{\arraycolsep}{1pt}\begin{bmatrix} 1& x\varpi^{-2}& \varpi^{-2}&-\varpi^{-3} \\ &1&&\varpi^{-2} \\ &&1&-x\varpi^{-2}\\ & &&1 \end{bmatrix} )
v\dif a\dif b\dif x\dif z.
\end{align}

\begin{lemma}\label{100eqlemma}
If $v\in V(0)$, then we have that \eqref{100eq} is given by
\begin{align*}
&q^2\chi(-1)\!\int\limits_{\OF^\times-(1+\p)}\int\limits_{\OF^\times}\int\limits_{\OF^\times} \int\limits_{\OF^\times}\chi(ba(1-z)z)
\eta\tau^{-1}\pi(\begin{bmatrix}
1&b\varpi^{-1} &&\\
&1&&\\
&&1&-b\varpi^{-1} \\
&&&1
\end{bmatrix}\\
&\begin{bmatrix}
1&&-x\varpi^{-3}& b(x-za\varpi)\varpi^{-4} \\
&1&b^{-1}(x+a\varpi)\varpi^{-2}&-x\varpi^{-3}\\
&&1&\\
&&&1
\end{bmatrix})v\dif x\dif a\dif b\dif z\\
&+\!\int\limits_{\OF^\times-(1+\p)}\int\limits_{\OF^\times}\int\limits_{\OF^\times} \int\limits_{\OF}\chi(abz(1-z))
\eta\tau\pi(\setlength{\arraycolsep}{1pt}\begin{bmatrix}
1&b\varpi^{-2}&\\
&1&&\\
 &&1&-b\varpi^{-2}\\
& &&1
\end{bmatrix}
\begin{bmatrix}
1 &  & a\varpi^{-1}&-ab(1-z)\varpi^{-3} \\
&1& & a\varpi^{-1}\\
&&1 & \\
&&&1
\end{bmatrix}
) v\dif x\dif a\dif b\dif z\\
&+q\chi(-1)\!\int\limits_{\OF^\times-(1+\p)}\int\limits_{\OF^\times}\int\limits_{\OF^\times} \int\limits_{\OF^\times - A(z)}\chi(abx)
\eta\pi(\begin{bmatrix}
1&b \varpi^{-1} &&\\
&1&&\\
&&1&-b\varpi^{-1} \\
&&&1
\end{bmatrix}\\
&\begin{bmatrix}
1&&a\varpi^{-2} & -abx^{-1}(1+x-z)\varpi^{-3} \\
&1& -ab^{-1}xz (1-z +zx)^{-1}\varpi^{-1} & a\varpi^{-2} \\
&&1&\\
&&&1
\end{bmatrix}
) v\dif x\dif a\dif b\dif z\\
&+\chi(-1)\!\int\limits_{\OF^\times-(1+\p)}\int\limits_{\OF^\times}\int\limits_{\OF^\times}\chi(b(1-z))
\eta\pi(\begin{bmatrix}
1&&a\varpi^{-2}&-b\varpi^{-3}\\
&1&-a^2b^{-1}z\varpi^{-1}&a\varpi^{-2}\\
&&1&\\
&&&1
\end{bmatrix})v\dif a\dif b\dif z.
\end{align*}
\end{lemma}
The \hyperlink{100eqlemmaproof}{proof} of this lemma is both lengthy and delicate and is given below in Section \ref{lemmaproofsec}.
The identities from the next lemma will be used in the calculation of the remaining term \eqref{101eq}.
\begin{lemma}
\label{101preplemma}
Let  $v\in V(0)$ and $z \in 1+\p$.  
\begin{enumerate}
 \item Assume that $x \in \OF^\times$. Then:                                                                  
\begin{align*}
&\pi( \begin{bmatrix}
1&& &\\
&1&&\\
 &&1&\\
 z\varpi^{3}&&&1
\end{bmatrix}
\begin{bmatrix}
1&x\varpi^{-2}&\varpi^{-2} &-\varpi^{-3}\\
&1&&\varpi^{-2}\\
 &&1&-x\varpi^{-2}\\
& &&1
\end{bmatrix}) v\\
&= \pi( \begin{bmatrix}
\varpi^{-1}&&&\\
&\varpi^{2}&&\\
&&\varpi^{-2}&\\
 & & &\varpi
\end{bmatrix}
\begin{bmatrix}
1&&(xz)^{-1}\varpi^{-2}&z^{-1} \varpi^{-1}\\
&1&-z(xz+(1-z) \varpi)^{-1} \varpi^{-4}&(xz)^{-1}\varpi^{-2}\\
&&1&\\
 & & &1 
\end{bmatrix}
) v.
\end{align*}
\item Assume that $x \in \OF^\times$. Then: 
\begin{align*}
&\pi( \begin{bmatrix}
1&& &\\
&1&&\\
 &&1&\\
 z\varpi^{3}&&&1
\end{bmatrix}
\begin{bmatrix}
1&x\varpi^{-1}&\varpi^{-2} &-\varpi^{-3}\\
&1&&\varpi^{-2}\\
 &&1&-x\varpi^{-1}\\
& &&1
\end{bmatrix}) v\\
&=\pi(\setlength{\arraycolsep}{1pt} \begin{bmatrix}
\varpi^{-1}&&&\\
&\varpi&&\\
&&\varpi^{-1}&\\
 & & &\varpi
\end{bmatrix}
\begin{bmatrix}
1&&(1-z+zx)^{-1}\varpi^{-2}&-z^{-2}(1-z+x)^{-1}\varpi^{-1}+z^{-1}\varpi^{-1}\\
&1&-(1-z+zx)^{-1} z \varpi^{-3}&(1-z+zx)^{-1}\varpi^{-2}\\
&&1&\\
 & & &1 
\end{bmatrix}
) v.
\end{align*}
\item Assume that $x \in (1-z^{-1})\varpi^{-1}+\OF^\times$. Then:
\begin{align*}
&\pi( \begin{bmatrix}
1&& &\\
&1&&\\
 &&1&\\
 z\varpi^{3}&&&1
\end{bmatrix}
\begin{bmatrix}
1&x&\varpi^{-2} &-\varpi^{-3}\\
&1&&\varpi^{-2}\\
 &&1&-x\\
& &&1
\end{bmatrix}) v\\
&= \pi( 
 \begin{bmatrix}
\varpi^{-1}&& & \\
&1&&\\
 &&1&\\
&&&\varpi
\end{bmatrix}
\begin{bmatrix}
1&&-z^{-1}w^{-1}\varpi^{-2}&  (1+zw\varpi) z^{-2}w^{-1}\varpi^{-2}\\
&1&w^{-1}\varpi^{-2}&-z^{-1}w^{-1}\varpi^{-2}\\
&&1&\\
&&&1
\end{bmatrix}
) v
\end{align*}
where $w=(1-x\varpi-z^{-1})\varpi^{-1} \in \OF^\times$. 
\item Assume that $x \in (1-z^{-1})\varpi^{-1}+\varpi \OF^\times$. Then:
\begin{align*}
&\pi( \begin{bmatrix}
1&& &\\
&1&&\\
 &&1&\\
 z\varpi^{3}&&&1
\end{bmatrix}
\begin{bmatrix}
1&x&\varpi^{-2} &-\varpi^{-3}\\
&1&&\varpi^{-2}\\
 &&1&-x\\
& &&1
\end{bmatrix}) v\\
&= \pi( 
\setlength{\arraycolsep}{3pt} \begin{bmatrix}
\varpi^{-1}&& & \\
&\varpi^{-1}&&\\
 &&\varpi&\\
&&&\varpi
\end{bmatrix}\setlength{\arraycolsep}{2pt}
\begin{bmatrix}
1&-z^{-1}\varpi^{-1}& &\\
&1&&\\
 &&1&z^{-1}\varpi^{-1}\\
& &&1
\end{bmatrix}
\begin{bmatrix}
1&&&z^{-1}\varpi^{-1} \\
 & 1& w^{-1}\varpi^{-1} & \\
&&1& \\
&& & 1 
\end{bmatrix}) v
\end{align*}
where $w=(1-x\varpi -z^{-1}) \varpi^{-2} \in \OF^\times$. 
\item Assume that $x \in (1-z^{-1}) \varpi^{-1} +\p^2$. Then:
\begin{align*}
\pi( \setlength{\arraycolsep}{3pt}\begin{bmatrix}
1&& &\\
&1&&\\
 &&1&\\
 z\varpi^{3}&&&1
\end{bmatrix}\!\!\!
\begin{bmatrix}
1&x&\varpi^{-2} &-\varpi^{-3}\\
&1&&\varpi^{-2}\\
 &&1&-x\\
& &&1
\end{bmatrix}) v
=\pi(  \begin{bmatrix}
\varpi^{-1}\!\!\!\!\!\!&& & \\
&\varpi^{-2}\!\!\!&&\\
 &&\varpi^2\!\!\!&\\
&&&\varpi
\end{bmatrix}\!\!\!
\begin{bmatrix}
1&-z^{-1}\varpi^{-2}& &\varpi^{-1}\\
&1&&\\
 &&1&z^{-1}\varpi^{-2}\\
& &&1
\end{bmatrix}
) v.
\end{align*}
\end{enumerate}
\end{lemma}
The \hyperlink{101preplemmaproof}{proof} of this lemma is given below in Section \ref{lemmaproofsec}.
\begin{lemma}\label{101eqlemma}
If $v\in V(0)$, then we have that \eqref{101eq} is given by
\begin{align*}
&q\chi(-1)\int\limits_{\OF^\times}\int\limits_{\OF^\times}\int\limits_{\OF^\times} \chi(b)
\eta^2\tau^{-2}\pi(
\begin{bmatrix}
1&&a\varpi^{-2}&b\varpi^{-1}\\
&1&x \varpi^{-4}&a\varpi^{-2}\\
&&1&\\
 & & &1 
\end{bmatrix} )
v\dif a\dif b\dif x\\
&+\chi(-1)\int\limits_{\OF^\times}\int\limits_{\OF^\times}\int\limits_{\OF^\times} \chi(bx)
\eta^2\tau^{-1}\pi(\begin{bmatrix}
1&&a\varpi^{-2}&b(x-1)\varpi^{-1}\\
&1&-a^2b^{-1}\varpi^{-3}&a\varpi^{-2}\\
&&1&\\
 & & &1 
\end{bmatrix} )
v\dif a\dif b\dif x\\
&+q^{-1}\chi(-1)\int\limits_{\OF^\times}\int\limits_{\OF^\times}\int\limits_{\OF^\times} \chi(bx)
\eta^2\pi(
\begin{bmatrix}
1&&a\varpi^{-2}&  b(1+x\varpi)\varpi^{-2}\\
&1&a^2b^{-1}\varpi^{-2}&a\varpi^{-2}\\
&&1&\\
&&&1
\end{bmatrix} )
v\dif a\dif b\dif x\\
&+q^{-2}\chi(-1)\int\limits_{\OF^\times}\int\limits_{\OF^\times}\int\limits_{\OF^\times} \chi(b)
\eta^2\tau\pi(
\begin{bmatrix}
1&a\varpi^{-1}& &\\
&1&&\\
 &&1&-a\varpi^{-1}\\
& &&1
\end{bmatrix}
\begin{bmatrix}
1&&&b\varpi^{-1} \\
 & 1& x\varpi^{-1}& \\
&&1& \\
&& & 1 
\end{bmatrix})
v\dif a\dif b\dif x\\
&+q^{-3}\chi(-1)\int\limits_{\OF^\times}\int\limits_{\OF^\times} \chi(b)
\eta^2\tau^2\pi(\begin{bmatrix}
1&a\varpi^{-2}& &b\varpi^{-1}\\
&1&&\\
 &&1&-a\varpi^{-2}\\
& &&1
\end{bmatrix} )
v\dif a\dif b.
\end{align*}
\end{lemma}
The \hyperlink{101eqlemmaproof}{proof} of this lemma is given below in Section \ref{lemmaproofsec}.
\section{Proofs of the lemmas in the local calculation}\label{lemmaproofsec}
\begin{proof}[\hypertarget{termonelemmaproof}{Proof of} \ref{termonelemma}]
In this proof we use the methods discussed at the beginning of Section \ref{localsec} to obtain an upper triangular form of summand \eqref{termoneeq} of the operator $T_\chi$.
\begin{align*}
&q^3\int\limits_{\OF}\int\limits_{\OF}\int\limits_{\OF^\times}\int\limits_{\OF^\times}
\chi(ab)\pi(
 )
v\dif a\dif b.
\end{align*}
This completes the calculation.
\end{proof}

\end{document}